\renewcommand{\hom}{\operatorname{Hom}}
\newcommand{\smallcup}{\operatorname{\smallsmile}}
\newcommand{\image}{\operatorname{Im}}
\newcommand{\Z}{\mathbf{Z}}
\newcommand{\C}{\mathbf{C}}
\newcommand{\Ad}{\operatorname{Ad}}
\newcommand{\tr}{\operatorname{tr}}
\newcommand{\SLn}[1][n]{\mathrm{SL}(#1)}
\newcommand{\sln}[1][n]{\mathfrak{sl}(#1)}
\newcommand{\GLn}[1][n]{\mathrm{GL}(#1)}
\newcommand{\gln}[1][n]{\mathfrak{gl}(#1)}
\newcommand{\metrep}{\varrho}
\def\co{\colon\thinspace}
\newenvironment{proof}[1][Proof]%
{
\begin{trivlist} \item[]  {\em #1.} }%
{\hspace*{\fill} $\Box$
\end{trivlist}}
\theoremstyle{change}
\newtheorem{thm}{Theorem}[section]
\newtheorem{prop}[thm]{Proposition}
\newtheorem{lemma}[thm]{Lemma}
\newtheorem{cor}[thm]{Corollary}
       \newtheorem{remark}[thm]{Remark}
\title{Irreducible representations of knot groups into $\SLn[n,\C]$}
\author{Leila Ben Abdelghani and Michael Heusener}
\date{}
\begin{document}
\selectlanguage{english}

\maketitle
\begin{abstract}
The aim of this article is to study the existence of certain reducible, metabelian representations of knot groups into 
$\mathrm{SL}(n,\mathbf{C})$ which generalise the representations studied previously  by G.~Burde and G.~de Rham.
Under specific hypotheses we prove  the existence of irreducible deformations of such representations of knot groups into 
$\mathrm{SL}(n,\mathbf{C})$.

 \medskip
 \noindent \emph{MSC:} 57M25; 57M05; 57M27\\
 \emph{Keywords:} knot group; Alexander module; variety of representations; character variety.
\end{abstract}

\section{Introduction }
\label{Introduction}

In  \cite{BenAbdelghani-Heusener-Jebali2010}, the authors studied the deformations of certain metabelian, reducible representations of knot groups into $\SLn[3,\C]$. In this paper we continue this study by generalizing all of 
the  results of \cite{BenAbdelghani-Heusener-Jebali2010} to the group $\SLn[n,\C]$ (see Theorem~\ref{thm:mainthm}). 

Let $\Gamma$ be a finitely generated group.
The set $R_n(\Gamma):=R(\Gamma,\SLn[n,\C])$ of homomorphisms of $\Gamma$ in $\SLn[n,\C]$ is called the $\SLn[n,\C]$-representation variety of $\Gamma$. It is a (not necessarily irreducible) algebraic variety. 
A representation $\rho\co\Gamma\to \SLn[n,\C]$ is called \emph{abelian} (resp.\ \emph{metabelian}) if the restriction of $\rho$ to the first (resp.\ second) commutator subgroup of $\Gamma$ is trivial.
The representation $\rho\co\Gamma\to \SLn$ is called \emph{reducible} if there exists a proper subspace $V\subset\C^n$ such that $\rho(\Gamma)$ preserves $V$. Otherwise $\rho$ is called \emph{irreducible}.

Let $\Gamma$ denote the 
\emph{knot group}  of the knot $K\subset S^3$ i.e.\ $\Gamma$ is the fundamental group of the knot complement of $K$ in $S^3$.
Since the ring of complex  Laurent polynomials $\C[t^{\pm1}]$ is a principal ideal domain, 
the complex \emph{Alexander module} $M(t)$ of $K$  
decomposes into a direct sum of cyclic modules. A generator of the order ideal of $M(t)$
is called the \emph{Alexander polynomial} of $K$. It will be denoted by $\Delta_K(t)\in\C[t^{\pm1}]$,
and it is unique up to multiplication by a unit $c\, t^k\in\C[t^{\pm1}]$, $c\in\C^*$, $k\in\Z$.
For a given root $\alpha\in\C^*$ of 
$\Delta_K(t)$ we let $\tau_{\alpha}$ denote the $(t-\alpha)$-torsion of the Alexander module.
(For details see Section~\ref{sec:notations}.)

The main result of this article is the following theorem which
generalizes the results of \cite{BenAbdelghani-Heusener-Jebali2010} where the case $n=3$ was investigated. It also applies in  the case $n=2$ which was studied in
\cite{BenAbdelghani2000} and~\cite[Theorem~1.1]{Heusener-Porti-Suarez2001}. 
\begin{thm}\label{thm:mainthm} Let $K$ be a knot in the $3$-sphere $S^3$.
If the $(t-\alpha)$-torsion $\tau_\alpha$ of the Alexander module is cyclic of the form $\C[t^{\pm1}]\big/(t-\alpha)^{n-1}$, $n\geq 2$, 
then for each $\lambda \in \C^*$ such that $\lambda^n=\alpha$ there exists a certain reducible metabelian representation 
$\metrep_\lambda$ of the knot group $\Gamma$ into $\SLn[n,\C]$.
Moreover, the representation $\metrep_\lambda$ is a smooth point of the representation variety $R_{n}(\Gamma)$, 
it is contained in a unique $(n^2+n-2)$-dimensional component $R_{\metrep_\lambda}$ of $R_n(\Gamma)$.
Moreover, $R_{\metrep_\lambda}$ contains irreducible non-metabelian representations which deform $\metrep_\lambda$. 
\end{thm}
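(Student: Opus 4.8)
The plan is to proceed in three stages: construct the representation $\metrep_\lambda$, compute the cohomology that controls deformations, and then promote the first-order deformations to honest ones while showing that a generic deformation is irreducible and non-metabelian.

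\emph{Construction of $\metrep_\lambda$.} First I would build the reducible metabelian representation explicitly. The knot group surjects onto $\Z$ via the abelianization, and the commutator subgroup acts on the Alexander module $M(t)$. The hypothesis that $\tau_\alpha\cong\C[t^{\pm1}]/(t-\alpha)^{n-1}$ means that, after tensoring appropriately, we get an $(n-1)$-dimensional $\C$-vector space on which the meridian acts (up to the scalar $\lambda$ coming from $\lambda^n=\alpha$) by a single Jordan block. Writing a meridian $m$ and using a splitting $\Gamma=\Gamma'\rtimes\langle m\rangle$ only schematically, one puts $\metrep_\lambda(m)$ equal to a companion-type matrix with eigenvalues a suitable arrangement of powers so that the determinant is $1$ (this is where $\lambda^n=\alpha$ enters: the diagonal part is $\mathrm{diag}(\lambda,\lambda,\dots,\lambda)$ twisted by the unipotent Jordan block of size $n-1$ sitting in a corner), and $\metrep_\lambda(\Gamma')$ acts through $\tau_\alpha$ by the nilpotent upper-triangular part. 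This generalizes the Burde--de Rham representations (the case $n=2$) and the $n=3$ case of \cite{BenAbdelghani-Heusener-Jebali2010}. One must check this is a well-defined homomorphism into $\SLn[n,\C]$, that it is reducible (it preserves the flag spanned by the first $k$ basis vectors) and metabelian (the second commutator subgroup acts trivially since $\tau_\alpha$ is already abelian).

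\emph{Cohomological computation.} The tangent space to $R_n(\Gamma)$ at $\metrep_\lambda$ is $Z^1(\Gamma;\sln[n,\C]_{\Ad\metrep_\lambda})$, and obstructions live in $H^2$. The key computation, following the strategy of the earlier papers, is to decompose $\sln[n,\C]$ under $\Ad\metrep_\lambda$ into a filtration whose graded pieces are tensor products of the standard module and its dual, and then to identify the relevant $H^1$ and $H^2$ groups with (twisted) Alexander-module invariants of $K$. The hypothesis that $\tau_\alpha$ is a \emph{single} cyclic block of size exactly $n-1$ is what forces these cohomology groups to have the precise dimensions needed: one expects $\dim Z^1=n^2+n-2$ and $\dim H^1$ correspondingly small, matching the claimed component dimension (note $n^2+n-2=\dim R_n(\Gamma)$ should equal $\dim Z^1$ since a smooth point sits on a component of dimension $\dim Z^1 - \dim H^1_{\text{obstructed}}$, but here I would expect the count to come out so that $\metrep_\lambda$ is a smooth point lying on a unique component of that dimension). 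This is the technical heart and the main obstacle: one needs Poincar\'e duality for the knot exterior (a compact $3$-manifold with torus boundary), the half-lives-half-dies principle for the boundary, and a careful analysis of how the cup product $H^1\times H^1\to H^2$ behaves to rule out obstructions.

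\emph{Smoothness and integrability.} Having computed that the obstruction map vanishes to the relevant order — concretely, that every cocycle in $Z^1$ is integrable, which one typically shows by exhibiting enough actual curves of representations or by an obstruction-theoretic argument showing the quadratic (cup-product) obstruction and all higher ones vanish — I conclude $\metrep_\lambda$ is a smooth point of $R_n(\Gamma)$ and hence lies on a unique component $R_{\metrep_\lambda}$, of dimension $\dim Z^1 = n^2+n-2$. Finally, to see that $R_{\metrep_\lambda}$ contains irreducible non-metabelian representations deforming $\metrep_\lambda$: the reducible representations near $\metrep_\lambda$ and the metabelian representations near $\metrep_\lambda$ each form proper closed (or at least nowhere-dense constructible) subsets of $R_{\metrep_\lambda}$ — for the metabelian ones this follows because being metabelian is a Zariski-closed condition cut out by the vanishing of $\rho$ on $\Gamma''$, and one checks that some explicit tangent vector in $Z^1$ is not tangent to this locus; for the reducible ones one uses that a non-trivial obstruction-free deformation direction must break the invariant flag. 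Since a nonempty open subset of an irreducible variety is dense, a generic point of $R_{\metrep_\lambda}$ is then both irreducible and non-metabelian, which finishes the proof.
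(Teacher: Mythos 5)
Your three-step outline (explicit construction, cohomology via a filtration of $\sln$ adapted to $\Ad\circ\metrep_\lambda$, deform) does match the paper's strategy at a high level, and your appeal to Poincar\'e duality and ``half-lives-half-dies'' for the smoothness claim is faithful to what actually drives Proposition~\ref{prop:smoothpoint}, which the paper cites rather than re-proves. One small but real error in the construction step: the diagonal part of $\metrep_\lambda$ on a meridian is \emph{not} $\mathrm{diag}(\lambda,\ldots,\lambda)$ times a unipotent -- that would have determinant $\lambda^n=\alpha\neq1$. In the paper one takes $\metrep_\lambda(\gamma)=\lambda^{-h(\gamma)}\metrep(\gamma)$ with $\metrep$ having top-left entry $\alpha^{h(\gamma)}$ and lower block $J_{n-1}^{h(\gamma)}$, so on a meridian the diagonal is $(\lambda^{n-1},\lambda^{-1},\ldots,\lambda^{-1})$; the condition $\lambda^n=\alpha$ is exactly what makes the determinant $1$.

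The genuine gap is in your last step. You assert that the reducible and metabelian loci are proper closed subsets of the unique component $R_{\metrep_\lambda}$ and then take a generic point. But properness is precisely what must be shown: without first producing a single irreducible (resp.\ non-metabelian) representation in $R_{\metrep_\lambda}$, nothing rules out the possibility that the reducible (or metabelian) locus is all of $R_{\metrep_\lambda}$. Your parenthetical ``one checks that some explicit tangent vector in $Z^1$ is not tangent to this locus'' begs the question, because to interpret ``not tangent to the reducible locus'' you would need to control the tangent cone of that locus at $\metrep_\lambda$; near a reducible representation the invariant flag can deform and the locus can have several branches, so this is not cheap. The paper sidesteps the issue by doing the integration and transversality by hand: Corollary~\ref{cor:cocycles} produces an explicit cocycle $v_{n-1}$ whose lower-left entries involve the non-principal derivation $z_1^-\in Z^1(\Gamma;\C_{\alpha^{-1}})$; one deforms $\metrep_\lambda$ along $v_{n-1}$ (legitimate since $\metrep_\lambda$ is a smooth point), and Lemma~\ref{lem:irred} shows by a Burnside-algebra computation that for small $t\neq 0$ the two matrices $\rho_t(S_1),\rho_t(S_2)$ already generate $M(n,\C)$, hence $\rho_t$ is irreducible. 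Non-metabelianness is then not a dimension count at all but the Boden--Friedl trace criterion: every irreducible metabelian $\SLn$-representation of a knot group has $\tr\rho(\mu)=0$ on a meridian, whereas $\tr\metrep_\lambda(S_1)=\lambda^{-1}(\lambda^n+n-1)\neq0$ (since $\lambda^n=1-n$ would force $n\mid\Delta_K(1)=\pm1$), so this trace is nonzero on a whole neighbourhood of $\metrep_\lambda$ and nearby irreducibles are automatically non-metabelian. Without this explicit cocycle, the Burnside argument, and the trace obstruction, your final step does not close.
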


\medskip

This paper is organised as follows. In Section~\ref{sec:notations} we introduce some notations and recall some facts which will be used in this article. In Section~\ref{sec:reducible} we study the existence of certain reducible representations.
These representations were previously studied in \cite{Jebali2008}, and we treat the existence results from a more general point of view.
Section~\ref{sec:CohomologicalComputations} is devoted to the proof of Proposition~\ref{prop:dimensions},
and it contains all necessary cohomological calculations. 
In the last section we prove that there are irreducible non-metabelian deformations of the initial reducible representation.

\paragraph{Acknowledgements.} Both authors are pleased to acknowledge the support by the French-Tunisian 
CMCU project \no 12G/1502. Moreover,
the first author was supported by the project Erasmus mundus E-GOV-TN. 
The second author acknowledges support from the ANR projects SGT and ModGroup.

\section{Notations and facts}
\label{sec:notations}

To shorten notation we will simply write $\SLn$ (respectively $\GLn$) instead of $\SLn[n,\C]$ (respectively $\GLn[n,\C]$) and $\sln$ 
(respectively $\gln$) instead of $\sln[n,\C]$ (respectively $\gln[n,\C]$).

\paragraph{Group cohomology.}
\label{Group cohomology and representation variety}
 The general reference for group cohomology is K.~Brown's book \cite{Brown1982}.
Let $A$ be a $\Gamma$-module. We denote by $C^*(\Gamma;A)$ the cochain complex, the coboundary 
operator $\delta\co C^n(\Gamma;A)\to C^{n+1}(\Gamma;A)$ is given by:
\begin{multline*}\delta f(\gamma_1,\ldots,\gamma_{n+1})=\gamma_1\cdot f(\gamma_2,\ldots,\gamma_{n+1})\\+\sum_{i=1}^{n}(-1)^i f(\gamma_1,\ldots,\gamma_{i-1},\gamma_i\gamma_{i+1},\ldots,\gamma_{n+1})+(-1)^{n+1} f(\gamma_1,\ldots,\gamma_n)\,.\end{multline*}
 The coboundaries (respectively cocycles, cohomology) of $\Gamma$ with coefficients in $A$ are denoted by $B^*(\Gamma;A)$ (respectively $Z^*(\Gamma;A),\ H^*(\Gamma;A)$).  
In what follows
$1$-cocycles and $1$-coboundaries  will be also called \emph{derivations} and \emph{principal derivations} respectively. 

Let $A_1,\ A_2$ and $A_3$ be $\Gamma$-modules. The cup product of two cochains $u\in C^p(\Gamma;A_1)$ and $v\in C^q(\Gamma;A_2)$ is the cochain $u\smallcup v\in 
C^{p+q}(\Gamma; A_1\otimes A_2)$ defined by
\begin{equation}\label{eq:cup}
u\smallsmile v(\gamma_1,\ldots,\gamma_{p+q}):=u(\gamma_1,\ldots,\gamma_p)\otimes\gamma_1\ldots\gamma_p\circ v(\gamma_{p+1},\ldots,\gamma_{p+q})\,.
\end{equation}
Here $A_1\otimes A_2$ is a $\Gamma$-module via the diagonal action. It is possible to combine the cup product with any $\Gamma$-invariant bilinear map $A_1\otimes A_2\to A_3$. We are mainly interested in the product map $\mathbf{C}\otimes\mathbf{C}\to\mathbf{C}$.
\begin{remark}
Notice that  our definition of the cup product \eqref{eq:cup} 
differs from the convention used in \cite[V.3]{Brown1982} by the sign $(-1)^{pq}$.
Hence with the definition \eqref{eq:cup} the following formula holds:
\[
\delta( u\smallcup v ) = (-1)^q \, \delta u \smallcup v + u \smallcup \delta v\,.
\]
\end{remark}

A short exact sequence
\[ 0\to A_1 \stackrel{i}{\longrightarrow} A_2 \stackrel{p}{\longrightarrow} A_3 \to 0\]
of $\Gamma$-modules gives rise to a short exact sequence of cochain complexes:
\[ 0 \to C^*(\Gamma;A_1)\stackrel{i^*}{\longrightarrow} C^*(\Gamma;A_2) \stackrel{p^*}{\longrightarrow} C^*(\Gamma;A_3) \to 0\,.\]
We will make use of the corresponding long exact cohomology sequence
(see \cite[III. Prop.~6.1]{Brown1982}):
\[ 0\to H^0(\Gamma;A_1)\longrightarrow H^0(\Gamma;A_2) \longrightarrow H^0(\Gamma;A_3)\stackrel{\beta^0}{\longrightarrow}
H^1(\Gamma;A_1)\longrightarrow \cdots\]
Recall that the Bockstein homomorphism 
$\beta^{n}\co H^n(\Gamma;A_3)\to H^{n+1}(\Gamma;A_1)$ is determined by the snake lemma: 
if $z\in Z^n(\Gamma;A_3)$ is a cocycle and if $\tilde z \in (p^*)^{-1}(z)\subset C^n(\Gamma;A_2)$ is any lift of $z$ then  $\delta_2 (\tilde z) \in \image(i^*)$ where $\delta_2$ the coboundary operator of $C^*(\Gamma;A_2)$. It follows that any cochain $z'\in C^{n+1}(\Gamma;A_3)$ such that 
$i^*(z') = \delta_2 (\tilde z)$ is a cocycle and that its cohomology class does only depend on the cohomology class represented by $z$. The cocycle $z'$ represents 
 the image of the cohomology class represented by $z$ under $\beta^{n}$. 
 \begin{remark}\label{rem:bockstein}
 By abuse of notation and if no confusion can arise,  we will write sometimes 
 $\beta^n(z)$ for a cocycle $z\in Z^n(\Gamma;A_3)$ even if the map $\beta^n$ is only well defined on cohomology classes. This will simplify the notations. 
 \end{remark}

\paragraph{The Alexander module}
Given a knot $K\subset S^3$, we let $X=\overline{S^3\backslash V(K)}$ denote its complement where $V(K)$ is a tubular neighborhood of $K$. Let $\Gamma=\pi_1(X)$ denote the fundamental group of $X$ and $h\co\Gamma\to\Z$, $h(\gamma)=\mathrm{lk}(\gamma,K)$, the canonical projection.
Recall also that a knot complement $X$ is aspherical (see \cite[3.F]{BZH2013}). In what follows we will identify the cohomology of the knot complement and of the knot group $\Gamma$. 

Note that there is a short exact splitting sequence
\[ 1\to\Gamma'\to\Gamma\to \langle t\mid -\rangle\to 1\]
where $\Gamma'=[\Gamma,\Gamma]$ denote the commutator subgroup of $\Gamma$ and where the surjection is given
by $\gamma\mapsto t^{h(\gamma)}$. Hence $\Gamma$ is isomorphic to the semi-direct product 
$\Gamma' \rtimes \Z$. Note that $\Gamma'$ is the fundamental group of the infinite cyclic covering 
$X_\infty$ of $X$. 
The abelian group $\Gamma'/\Gamma''\cong H_1(X_\infty,\Z)$ turns into a 
$\Z[t^{\pm1}]$-module via the action of the group of covering transformations which is isomorphic to
$\langle t\mid - \rangle$.
The $\Z[t^{\pm1}]$-module $H_1(X_\infty,\Z)$
is a finitely generated torsion module  called the \emph{Alexander module} of $K$. 
Note that there are isomorphisms of $\Z[t^{\pm1}]$-modules
\[
H_*(\Gamma; \Z[t^{\pm1}]) \cong H_*(X; \Z[t^{\pm1}]) \cong
H_*(X_\infty, \Z)
\]
where $\Gamma$ acts on $\Z[t^{\pm1}]$ via 
$\gamma\, p(t) = t^{h(\gamma)}\,p(t)$ for all $\gamma \in \Gamma$ and $p(t)\in\Z[t^{\pm1}]$.
(See \cite[Chapter~5]{Davis-Kirk2001} for more details.) 
In what follows we are mainly interested in the complex version 
$\C\otimes\Gamma'/\Gamma''\cong H_1(\Gamma; \C[t^{\pm1}])$ of the Alexander module. 
As $\C[t^{\pm1}]$ is a principal ideal domain, the Alexander module $H_1(\Gamma; \C[t^{\pm1}])$ 
decomposes into a direct sum of cyclic modules of the form
$\C[t^{\pm1}]/(t-\alpha)^k$, $\alpha\in\C^*\setminus\{1\}$ i.e.\ there exist 
$\alpha_1,\ldots \alpha_s\in\C^*$ such that
\[
H_1(\Gamma; \C[t^{\pm1}])\cong \tau_{\alpha_1}\oplus \cdots\oplus \tau_{\alpha_s}
\text{ where }
\tau_{\alpha_j} = \bigoplus_{i_j=1}^{n_{\alpha_j}} \C[t^{\pm1}] \big/(t-\alpha_j)^{r_{i_j}}
\]
denotes the $(t-\alpha_j)$-torsion of  $H_1(\Gamma;\C[t^{\pm1}])$.
A generator of the order ideal of $H_1(X_\infty,\C)$ is called the \emph{Alexander polynomial} 
$\Delta_{K}(t) \in \C [t^{\pm 1}]$ of $K$ i.e.\ $\Delta_{K}(t) $ is the product
\[
\Delta_{K}(t)=\prod_{j=1}^s \prod_{i_j=1}^{n_{\alpha_j}} (t-\alpha_j)^{r_{j_i}}\,.
\]
Notice that the Alexander polynomial is  symmetric and is well defined up to multiplication by a unit 
$c\,t^k$ of 
$\C[t^{\pm1}]$, $c\in\C^*$, $k\in\Z$. Moreover,  $\Delta_K(1)=\pm1\neq 0$
(see \cite{BZH2013}), and hence the $(t-1)$-torsion of the Alexander module is trivial.

For completeness we will state the next lemma which shows that
the cohomology groups $H^*(\Gamma; \C[t^{\pm1}]/(t-\alpha)^k)$ are determined by the Alexander module
$H_1(\Gamma; \C[t^{\pm1}])$. Recall that the action of $\Gamma$ on 
$\C[t^{\pm1}]/(t-\alpha)^k$ is induced by $\gamma\, p(t) = t^{h(\gamma)} p(t)$. 
\begin{lemma}\label{lem:H1GammaC} Let $K\subset S^3$ be a knot and $\Gamma$ its group.
Let $\alpha\in\C^*$  and let 
$\tau_{\alpha} = \bigoplus_{i=1}^{n_\alpha} \C[t^{\pm1}] \big/ (t-\alpha)^{r_i}$ denote
the $(t-\alpha)$-torsion of the Alexander module $H_1(\Gamma;\C[t^{\pm1}])$.
Then if $\alpha=1$ we have that $\tau_1$ is trivial and
\[
H^q(\Gamma; \C[t^{\pm1}]/(t-1)^k)\cong 
\begin{cases}
\C & \text{for $q=0,1$} \\
0   & \text{for $q\geq 2$.}
\end{cases}
\]
Moreover, for $\alpha\neq1$ we have:
\[
H^q(\Gamma; \C[t^{\pm1}]/(t-\alpha)^k)\cong 
\begin{cases}
0 & \text{for $q=0$ and $q\geq3$, } \\
\bigoplus_{i=1}^{n_\alpha}\C[t^{\pm1}] \big/ (t-\alpha)^{\min(k,r_i)}    & \text{for $q=1,2$.}
\end{cases}
\]
In particular,
$H^1(\Gamma; \C[t^{\pm1}]/(t-\alpha)^k)\neq 0 $ if and only 
$H_1(\Gamma;\C[t^{\pm1}])$ has non-trivial $(t-\alpha)$-torsion i.e\ if $\Delta_K(\alpha)=0$. 
\end{lemma}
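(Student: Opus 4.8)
The plan is to compute the cohomology $H^*(\Gamma; \C[t^{\pm1}]/(t-\alpha)^k)$ directly from the Alexander module $H_1(\Gamma;\C[t^{\pm1}])$, using the structure of $\Gamma$ as a knot group: $\Gamma$ has a $2$-dimensional $K(\Gamma,1)$ (the knot complement $X$ is aspherical and homotopy equivalent to a $2$-complex), so $H^q(\Gamma; A) = 0$ for $q \geq 3$ for any module $A$, and $H^0(\Gamma;A)$ is the submodule of invariants. This already handles $q \geq 3$ and $q = 0$: for $\alpha \neq 1$, an element $p(t) \in \C[t^{\pm1}]/(t-\alpha)^k$ fixed by all of $\Gamma$ must satisfy $t\,p(t) = p(t)$, i.e. $(t-1)p(t) = 0$; since $t - 1$ is a unit in $\C[t^{\pm1}]/(t-\alpha)^k$ when $\alpha \neq 1$, we get $p = 0$. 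For $\alpha = 1$ the same computation gives $H^0 = \C$. So the content is $q = 1$ and $q = 2$.

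\medskip

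For $q = 1, 2$ I would proceed by induction on $k$, using the short exact sequences of $\Gamma$-modules
\[
0 \to \C[t^{\pm1}]/(t-\alpha) \xrightarrow{\ (t-\alpha)^{k-1}\ } \C[t^{\pm1}]/(t-\alpha)^{k} \to \C[t^{\pm1}]/(t-\alpha)^{k-1} \to 0
\]
together with the long exact cohomology sequence from the excerpt. The base case $k=1$ requires computing $H^1$ and $H^2$ of $\C[t^{\pm1}]/(t-\alpha) \cong \C_\alpha$, the $1$-dimensional module on which $\gamma$ acts by $\alpha^{h(\gamma)}$. For $\alpha \neq 1$ one has $H^0 = 0$ and, by Poincaré duality for the knot complement (or by the Euler characteristic argument: $\chi(X) = 0$ forces $\dim H^1 = \dim H^0 + \dim H^2$, hence $\dim H^1 = \dim H^2$), together with $\dim_\C H_1(X;\C_\alpha) = $ number of $i$ with $(t-\alpha)^{r_i}$ dividing nothing less — in fact $\dim H^1(\Gamma;\C_\alpha) = n_\alpha$, the number of cyclic summands of $\tau_\alpha$. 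One gets this from $H_1(X;\C_\alpha) = H_1(X_\infty;\C) \otimes_{\C[t^{\pm1}]} \C_\alpha = \tau_\alpha \otimes_{\C[t^{\pm1}]} \C[t^{\pm1}]/(t-\alpha)$, which has dimension $n_\alpha$. For $\alpha = 1$ the module is trivial $\C$, and $H^*(\Gamma;\C) = H^*(X;\C)$ with $X$ a knot complement gives $\C, \C, 0$ in degrees $0, 1, 2$.

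\medskip

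For the inductive step, the long exact sequence reads
\[
\cdots \to H^1(\Gamma;\C[t^{\pm1}]/(t-\alpha)) \to H^1(\Gamma;\C[t^{\pm1}]/(t-\alpha)^k) \to H^1(\Gamma;\C[t^{\pm1}]/(t-\alpha)^{k-1}) \xrightarrow{\ \beta^1\ } H^2(\Gamma;\C[t^{\pm1}]/(t-\alpha)) \to \cdots
\]
and, using $H^0$ of everything is $0$ (for $\alpha\neq1$) and $H^3 = 0$, this becomes a $6$-term exact sequence in degrees $1,2$. The claimed answer $\bigoplus_i \C[t^{\pm1}]/(t-\alpha)^{\min(k,r_i)}$ in both degrees is consistent with this: a dimension count shows the alternating sum of dimensions is zero automatically, so I must identify the maps. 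The cleanest route is to observe that all these cohomology groups are $\C[t^{\pm1}]$-modules (the coefficient modules are $\C[t^{\pm1}]$-modules and $\Gamma$-equivariantly so), and that the long exact sequence is a sequence of $\C[t^{\pm1}]$-module maps; then use the known case $k=1$ plus the fact that $H^1(\Gamma;\C[t^{\pm1}]) = H_1(X_\infty;\C)^\vee$ in an appropriate sense, or more directly compare with the universal coefficient / Milnor exact sequence relating $H^*(\Gamma;\C[t^{\pm1}]/(t-\alpha)^k)$ to $\mathrm{Ext}$ and $\mathrm{Tor}$ of the Alexander module $H_1(\Gamma;\C[t^{\pm1}])$ with $\C[t^{\pm1}]/(t-\alpha)^k$. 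Since $\C[t^{\pm1}]$ is a PID and $\tau_\alpha = \bigoplus_i \C[t^{\pm1}]/(t-\alpha)^{r_i}$, both $\mathrm{Tor}_1$ and $\tau_\alpha \otimes$ of $\C[t^{\pm1}]/(t-\alpha)^k$ with this module compute to $\bigoplus_i \C[t^{\pm1}]/(t-\alpha)^{\min(k,r_i)}$, which is exactly the claimed answer.

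\medskip

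\textbf{Main obstacle.} The delicate point is pinning down $H^1$ and $H^2$ of the knot group with coefficients in $\C[t^{\pm1}]$ itself (equivalently the homology $H_*(X_\infty;\C)$) and feeding this correctly through a universal-coefficients/Milnor-sequence argument: the knot complement is a non-compact $2$-complex, so one must be careful about which duality or which algebraic mapping-cone description of $C_*(X;\C[t^{\pm1}])$ one uses, and about the shift between homology and cohomology. Once one knows $H_0(\Gamma;\C[t^{\pm1}]) = \C[t^{\pm1}]/(t-1)$, $H_1(\Gamma;\C[t^{\pm1}]) = \bigoplus \tau_{\alpha_j}$, and $H_q = 0$ otherwise, a clean cochain complex of free $\C[t^{\pm1}]$-modules computing these, tensored with $\C[t^{\pm1}]/(t-\alpha)^k$, yields the stated groups in degrees $1$ and $2$ by pure commutative algebra over the PID $\C[t^{\pm1}]$, and the final ``in particular'' is then immediate since $\tau_\alpha \neq 0 \iff (t-\alpha) \mid \Delta_K(t) \iff \Delta_K(\alpha) = 0$.
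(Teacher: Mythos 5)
Your proposal winds up at the paper's own argument: pass, via extension of scalars, to the chain complex $C_*(X_\infty;\C)$ of free $\C[t^{\pm1}]$-modules and apply the universal coefficient theorem over the PID $\C[t^{\pm1}]$, using $H_0(X_\infty;\C)\cong\C[t^{\pm1}]/(t-1)$, $H_1(X_\infty;\C)$ equal to the Alexander module, and $H_q(X_\infty;\C)=0$ for $q\geq2$. Two points on the write-up. First, you are computing \emph{co}homology, so the correct form of the UCT is
$H^q(\Gamma;M)\cong\hom_{\C[t^{\pm1}]}(H_q(X_\infty;\C),M)\oplus\mathrm{Ext}^1_{\C[t^{\pm1}]}(H_{q-1}(X_\infty;\C),M)$;
your final paragraph speaks instead of tensoring the chain complex with $\C[t^{\pm1}]/(t-\alpha)^k$ and of $\mathrm{Tor}_1$, which would yield homology. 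For cyclic $(t-\alpha)$-torsion modules over a PID the functors $\hom$, $\mathrm{Ext}^1$, $\otimes$, $\mathrm{Tor}_1$ all produce $\bigoplus_i\C[t^{\pm1}]/(t-\alpha)^{\min(k,r_i)}$, so your stated answer survives, but the functor should be corrected for the argument to actually establish the cohomology groups claimed. Second, once the UCT over $\C[t^{\pm1}]$ is invoked, the proposed induction on $k$ via the long exact sequence and the base-case detour through $H_1(X;\C_\alpha)$ (together with the unjustified step $\dim H^1(\Gamma;\C_\alpha)=\dim H_1(X;\C_\alpha)$, which for twisted coefficients requires a small extra argument) become superfluous; they are not wrong, but they are extra work for something the UCT handles uniformly in all degrees, which is exactly how the paper treats it. Your direct observations for $q=0$ (invariants, with $t-1$ acting invertibly when $\alpha\neq1$) and $q\geq3$ (the knot exterior is a $2$-complex) are correct and are a pleasant alternative to reading those degrees off the UCT as the paper does.
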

\begin{proof} 
Let $M$ be a  $\C[t^{\pm1}]$-module, then by the extension of scalars  \cite[III.3]{Brown1982} we have an isomorphism 
\[H^q(\Gamma;M)\cong H^q(\hom_{\C[t^{\pm1}]}(C_*(X_\infty,\C),M).\]
Since $\C[t^{\pm1}]$ is a principal ideal domain, we can apply the universal coefficient theorem and obtain
\[ H^q(\Gamma;M)\cong \mathrm{Ext}_{\C[t^{\pm1}]}^1(H_{q-1}(X_\infty,\C),M)\oplus \hom_{\C[t^{\pm1}]}(H_q(X_\infty,\C),M).\]
Now $H_0(X_\infty, \C) \cong \C \cong \C[t^{\pm1}]/ (t-1)$ and
$H_k(X_\infty, \C)=0$ for $k\geq 2$ (see \cite[Prop.~8.16]{BZH2013}) so we can apply the above isomorphisms to the modules
$\C[t^{\pm1}]/(t-\alpha)^k$ with $\alpha=1$ or $\alpha\not=1$.
Notice also that for $\lambda\neq\alpha$ the multiplication by $(t-\lambda)$ induces 
an isomorphism of $\C[t^{\pm1}]/(t-\alpha)^k$. 
\end{proof}

\paragraph{Representation variety.}
Let $\Gamma$ be a finitely generated group. The set of all homomorphisms of $\Gamma$ into $\SLn$ has the structure of an affine algebraic set (see \cite{Lubotzky-Magid1985} for details).
In what follows this affine algebraic set will be denoted by $R(\Gamma,\SLn)$ or simply by $R_n(\Gamma)$. 
Let $\rho\co\Gamma\to \SLn$ be a representation. 
The Lie algebra $\sln$ turns into a $\Gamma$-module via 
$\Ad\circ\rho$. This module will be simply denoted by 
$\sln_\rho$. A \emph{$1$-cocycle} or \emph{derivation} $d\in Z^1(\Gamma;\sln_\rho)$ is a map 
$d\co\Gamma\to \sln$ satisfying 
\[d(\gamma_1\gamma_2)=d(\gamma_1)+\Ad\circ\rho(\gamma_1)(d(\gamma_2))\quad,\ \forall\ \gamma_1,\ \gamma_2\in\Gamma\,.\]

It was observed by Andr\'e Weil \cite{Weil1964} that there is a natural inclusion of the Zariski tangent space 
$T_\rho^{Zar}(R_n(\Gamma))\hookrightarrow Z^1(\Gamma;\sln_\rho)$. Informally speaking, given a smooth curve $\rho_\epsilon$ of representations through $\rho_0=\rho$ one gets a $1$-cocycle $d\co\Gamma\to \sln$ by defining
\[ d(\gamma) := \left.\frac{d \, \rho_{\epsilon}(\gamma)}
{d\,\epsilon}\right|_{\epsilon=0} \rho(\gamma)^{-1},
\quad\forall\gamma\in\Gamma\,.\]

It is easy to see that the tangent space to the orbit by conjugation corresponds to the space of $1$-coboundaries $B^1(\Gamma;\sln_\rho)$. Here, $b\co\Gamma\to \sln$ is a coboundary if there exists 
$x\in \sln$ such that $b(\gamma)=\Ad\circ\rho(\gamma)(x)-x$. A detailed account can be found in \cite{Lubotzky-Magid1985}.

For the convenience of the reader, we state the following result which is implicitly contained in 
\cite{BenAbdelghani-Heusener-Jebali2010,Heusener-Porti-Suarez2001,Heusener-Porti2005}. 
A detailed proof of the following streamlined version can be found in \cite{Heusener-Medjerab2014}:
\begin{prop}\label{prop:smoothpoint}
Let $M$ be an orientable, irreducible $3$-manifold with infinite fundamental group $\pi_1(M)$ and incompressible tours boundary, and let
$\rho\co\pi_1(M)\to \SLn$ be a representation.

If 
$\dim H^1(M; \sln_{\rho}) =n-1$ then $\rho$   is a smooth point of the 
$\SLn$-representation variety $R_n(\pi_1(M))$. More precisely,
$\rho$ is contained in a unique component of dimension
$n^2+n-2 - \dim H^0(\pi_1(M);\sln_\rho)$.
\end{prop}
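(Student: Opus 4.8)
\emph{Proof sketch.} The plan is to split the statement into three parts: a computation of the relevant cohomological dimensions, the classical tangent-space bound of Weil, and a matching lower bound obtained by proving that $\rho$ is unobstructed --- the torus boundary entering decisively only in the last part. Write $V=\sln_\rho$ and $h^i=\dim H^i(\pi_1(M);V)$. Since $M$ is aspherical (being irreducible with infinite fundamental group), $H^*(\pi_1(M);V)=H^*(M;V)$, and likewise for $\partial M$. As $M$ is connected with non-empty boundary one has $H^3(M;V)=0$, and from $\chi(M)=\tfrac12\chi(\partial M)=0$ the Euler characteristic of $C^*(M;V)$ vanishes, so $h^2=h^1-h^0=(n-1)-h^0$ and $\dim Z^1(\pi_1(M);V)=\dim B^1+h^1=(n^2-1-h^0)+(n-1)=n^2+n-2-h^0$. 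The hypothesis $h^1=n-1$ then has three consequences, each a routine verification. (i)~Since $\pi_1(\partial M)$ is abelian, the centraliser of $\rho(\pi_1(\partial M))$ in $\gln$ has dimension at least $n$, so $\dim H^1(\partial M;V)=2\dim H^0(\partial M;V)\ge 2(n-1)$, whereas ``half lives, half dies'' (Poincar\'e duality on $\partial M$, using $V\cong V^*$ via the $\Ad$-invariant trace form) gives $\dim\image(i^*\co H^1(M;V)\to H^1(\partial M;V))=\tfrac12\dim H^1(\partial M;V)\le h^1=n-1$; comparing the two forces $\dim H^1(\partial M;V)=2(n-1)$ and the injectivity of $i^*$ in degree $1$. (ii)~Chasing the long exact sequence of the pair $(M,\partial M)$ and using relative Poincar\'e--Lefschetz duality (which gives $\dim H^2(M,\partial M;V)=\dim H_1(M;V)=h^1=n-1$), this degree-$1$ injectivity forces the connecting map $H^1(\partial M;V)\to H^2(M,\partial M;V)$ to be onto, hence $H^2(M,\partial M;V)\to H^2(M;V)$ to be zero, hence $i^*\co H^2(M;V)\to H^2(\partial M;V)$ to be \emph{injective} as well. (iii)~$R_n(\pi_1(\partial M))$ is the commuting variety of $\SLn$, which is irreducible of dimension $n^2+n-2$; its Zariski tangent space at $\rho|_{\pi_1(\partial M)}$ embeds into $Z^1(\pi_1(\partial M);V)$, which by~(i) also has dimension $n^2+n-2$, so $\rho|_{\pi_1(\partial M)}$ is a smooth point of $R_n(\pi_1(\partial M))$.

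By Weil's inclusion $T_\rho^{Zar}R_n(\pi_1(M))\hookrightarrow Z^1(\pi_1(M);V)$, every component of $R_n(\pi_1(M))$ through $\rho$ has dimension at most $n^2+n-2-h^0$, so it suffices to show that $\rho$ is \emph{unobstructed}, i.e.\ that every $u\in Z^1(\pi_1(M);V)$ prolongs to a formal deformation $\rho_t=\exp(tu+t^2u_2+\cdots)\rho$; then the germ of $R_n(\pi_1(M))$ at $\rho$ is formally smooth, hence smooth of dimension exactly $\dim Z^1=n^2+n-2-h^0$, which forces equality in the Weil bound and the uniqueness of the component. One prolongs stage by stage: at the $k$-th stage the obstruction to choosing $u_k$ is the class in $H^2(M;V)$ of the $2$-cocycle $\tfrac12\sum_{i+j=k,\,i,j\ge1}u_i\smallsmile u_j$ (cup product followed by the Lie bracket $V\otimes V\to V$). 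By naturality of the obstruction under the inclusion $\pi_1(\partial M)\hookrightarrow\pi_1(M)$, its image in $H^2(\partial M;V)$ is the corresponding obstruction for the deformation $\rho_t|_{\pi_1(\partial M)}$, and this image vanishes because $\rho|_{\pi_1(\partial M)}$ is a smooth point of $R_n(\pi_1(\partial M))$; the degree-$2$ injectivity of $i^*$ from~(ii) then yields that the obstruction itself vanishes in $H^2(M;V)$. Hence every stage can be completed, $\rho$ is unobstructed, and the proposition follows.

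The main obstacle is the homological bookkeeping underlying~(i) and~(ii): setting up relative Poincar\'e--Lefschetz duality correctly for the local system $V$ on the pair $(M,\partial M)$, identifying $V$ with $V^*$ through the trace form so that the isotropy statement ``half lives, half dies'' applies on $\partial M$, and chasing the long exact sequence of the pair to promote the injectivity of $i^*$ from degree $1$ to degree $2$. One also needs the standard but somewhat technical deformation theory over Artinian local rings --- that the $k$-th obstruction lives in $H^2(\pi_1(M);V)$ and is natural under group homomorphisms, and that the vanishing of all obstructions produces a formally smooth, hence smooth, point of the representation scheme. All of this is carried out in detail in \cite{Heusener-Medjerab2014}; the dimension count above is then elementary. \hspace*{\fill}$\Box$
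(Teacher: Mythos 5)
The paper does not prove this proposition itself: it records it as "implicitly contained in \cite{BenAbdelghani-Heusener-Jebali2010,Heusener-Porti-Suarez2001,Heusener-Porti2005}" and cites \cite{Heusener-Medjerab2014} for a detailed proof. Your reconstruction is correct and follows precisely the argument of that line of work (Heusener--Porti--Su\'arez for $\SLn[2]$, generalised by Heusener--Medjerab to $\SLn$): the Euler-characteristic and Weil bound give $\dim Z^1 = n^2+n-2-h^0$ as an upper bound for $\dim_\rho R_n(\pi_1(M))$; the lower bound $\dim H^0(\partial M;\sln_\rho)\ge n-1$ from the abelian image on the torus, combined with ``half lives, half dies,'' forces $i^*$ to be injective on $H^1$; the long exact sequence of the pair together with Lefschetz duality bootstraps this to injectivity of $i^*$ on $H^2$; and the smoothness of $R_n(\Z^2)$ at $\rho|_{\pi_1(\partial M)}$ (irreducibility of the commuting variety of $\SLn$ plus the tangent-space count) makes every obstruction class, which is natural under restriction to the boundary, vanish after applying the injective $i^*$ on $H^2$. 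Artin approximation then converts formal unobstructedness into smoothness and the dimension count. All the required ingredients are present and correctly assembled; the only step you state without proof --- irreducibility and dimension of the commuting variety of $\SLn$, hence smoothness of $\rho|_{\pi_1(\partial M)}$ --- is indeed the lemma isolated and proved in the cited reference.
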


\section{Reducible metabelian representations}
\label{sec:reducible}
Recall that every nonzero complex number $\alpha\in\C^*$ determines an action of a knot group $\Gamma$ on the complex numbers given by $\gamma\, x = \alpha^{h(\gamma)} x$ for $\gamma\in\Gamma$ and 
$x\in\C$. The resulting $\Gamma$-module will be denoted by $\C_\alpha$. Notice that
$\C_\alpha$ is isomorphic to $\C[t^{\pm1}]/(t-\alpha)$.

It is easy to see that a map
 $\Gamma\to\GLn[2,\C]$ given by
\begin{equation}\label{eq:rep-gl2}   
\gamma\mapsto
\begin{pmatrix} 1 & z_1(\gamma)\\0 & 1\end{pmatrix}
\begin{pmatrix} \alpha^{h(\gamma)} & 0\\0 & 1\end{pmatrix} =
\begin{pmatrix} \alpha^{h(\gamma)} & z_1(\gamma)\\0 & 1\end{pmatrix}
\end{equation}
is a representation 
if and only if the map $z_1\co\Gamma\to\C_\alpha$ is a derivation i.e. 
\[
\delta z_1(\gamma_1,\gamma_2)=  \alpha^{h(\gamma_1)} z_1(\gamma_2)- z_1(\gamma_1\gamma_2) + z_1(\gamma_1)=0 \text{ for all $\gamma_1,\gamma_2\in\Gamma$.}\]

The representation given by \eqref{eq:rep-gl2} is non-abelian if and only if $\alpha\not=1$ and the cocycle $z$ is not a coboundary.
Hence it follows from Lemma~\ref{lem:H1GammaC} that such a reducible non abelian representation exists if and only if $\alpha$ is a root of the Alexander polynomial.
These representations were first studied independently by G.~Burde \cite{Burde1967} and 
G.~de Rham \cite{deRham1967}.

We extend these considerations to a map
 $\Gamma\to\GLn[3,\C]$. It follows easily that
\begin{equation}\label{eq:rep-gl3}
\gamma\mapsto
\begin{pmatrix} \alpha^{h(\gamma)} & z_1(\gamma)& z_2(\gamma)\\0 & 1 & h(\gamma)
\\ 0 & 0 & 1\end{pmatrix}
\end{equation}
is a representation if and only if $\delta z_1=0$ and $\delta z_2+z_1\smallcup h =0$ i.e.
\[
\begin{cases}
\delta z_1(\gamma_1,\gamma_2)= 0 &  \text{ for all $\gamma_1,\gamma_2\in\Gamma$,} \\
\delta z_2(\gamma_1,\gamma_2) + z_1(\gamma_1) h(\gamma_2) =0 &
\text{ for all $\gamma_1,\gamma_2\in\Gamma$.} 
\end{cases}
\]
It was proved in \cite[Theorem~3.2]{BenAbdelghani-Lines2002} 
that the $2$-cocycle $z_1\smallcup h$ represents a non-trivial cohomology class in $H^2(\Gamma;\C_\alpha)$ provided that $z_1$ is not a coboundary and that
the $(t-\alpha)$-torsion of the Alexander module is semi-simple i.e.  
$\tau_\alpha = \C[t^{\pm1}]/ (t-\alpha)\oplus\cdots\oplus \C[t^{\pm1}]/ (t-\alpha)$.
Hence if we suppose that $z_1$ is not a coboundary then it is clear that a non-abelian representation  $\Gamma\to\GLn[3,\C]$ 
given by \eqref{eq:rep-gl3} can only exist if the $(t-\alpha)$-torsion $\tau_\alpha$ of the Alexander module has a direct summand of the form  $\C[t^{\pm1}]/ (t-\alpha)^s$, $s\geq 2$.

Representations $\Gamma\to\GLn[n,\C]$ of this type were studied in \cite{Jebali2008} where it was shown that the whole structure of the $(t-\alpha)$-torsion of the Alexander module can be recovered. 
Note that every metabelian representation of $\Gamma$ factors through 
the metabelian group $\Gamma'/\Gamma''\rtimes \Z$.

Let $\alpha\in\C^*$ be a non-zero complex number and $n\in\Z$, $n>1$. In what follows we consider the cyclic
$\C[t^{\pm1}]$-module $\C[t^{\pm1}] / (t-\alpha)^{n-1}$ and the semi-direct product
\[\C[t^{\pm1}] \big/ (t-\alpha)^{n-1} \rtimes\Z\]
where the multiplication is given by $(p_1,n_1)(p_2,n_2) = (p_1 +t^{n_1} p_2,n_1+n_2)$.
Let $I_{n}\in\SLn$ and $N_{n}\in\GLn$ denote  the identity matrix and  
the upper triangular Jordan normal form of a nilpotent matrix of degree $n$ respectively.
 For later use we note the following lemma which follows easily from the Jordan normal form theorem:
\begin{lemma}\label{lem:iso}
Let $\alpha\in\C^*$ be a nonzero complex number and let $\C^n$ be the $\C[t^{\pm1}]$-module 
with the action of $t^k$  given by
\begin{equation}\label{eq:action-J}
 t^k\,\mathbf{a} = \alpha^k\, \mathbf{a}\, J_{n}^k 
  \end{equation}
where $\mathbf a \in \C^n$ and $J_{n}=I_n+N_n$.%
Then the $\C[t^{\pm1}]$-module $\C^n$ is isomorphic to $\C[t^{\pm1}] / (t-\alpha)^{n}$.
\end{lemma}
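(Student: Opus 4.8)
**The plan is to prove Lemma~\ref{lem:iso} by exhibiting an explicit generator of $\C^n$ as a $\C[t^{\pm1}]$-module and showing the annihilator is exactly $(t-\alpha)^n$.** Since $\C[t^{\pm1}]$ is a principal ideal domain and $\C^n$ is finitely generated, it suffices (by counting $\C$-dimensions, both sides being $n$-dimensional over $\C$) to show that $\C^n$ is cyclic over $\C[t^{\pm1}]$ with annihilator ideal $(t-\alpha)^n$; then $\C^n \cong \C[t^{\pm1}]/(t-\alpha)^n$ automatically.

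\emph{First}, I would observe that the action of the polynomial $(t-\alpha)$ on a row vector $\mathbf a\in\C^n$ is, by \eqref{eq:action-J}, given by $(t-\alpha)\cdot\mathbf a = \alpha\,\mathbf a\, J_n - \alpha\,\mathbf a = \alpha\,\mathbf a\,(J_n - I_n) = \alpha\,\mathbf a\, N_n$. Hence the operator ``multiplication by $(t-\alpha)$'' on $\C^n$ is just $\alpha$ times right-multiplication by the nilpotent matrix $N_n$, whose $n$-th power vanishes while its $(n-1)$-st power does not. Consequently $(t-\alpha)^k\cdot\mathbf a = \alpha^k\,\mathbf a\, N_n^k$, so $(t-\alpha)^n$ annihilates all of $\C^n$, and $(t-\alpha)^{n-1}$ does not annihilate, e.g., the first standard basis row vector $\mathbf e_1$ (since $\mathbf e_1 N_n^{n-1} = \mathbf e_{n}\neq 0$, with the appropriate indexing convention for $N_n$).

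\emph{Second}, I would show $\mathbf e_1$ generates $\C^n$ over $\C[t^{\pm1}]$. The submodule generated by $\mathbf e_1$ contains $\mathbf e_1,\ (t-\alpha)\mathbf e_1 = \alpha\,\mathbf e_1 N_n,\ \ldots,\ (t-\alpha)^{n-1}\mathbf e_1 = \alpha^{n-1}\mathbf e_1 N_n^{n-1}$, and these are (up to nonzero scalars) the vectors $\mathbf e_1 N_n^{j}$ for $j=0,\ldots,n-1$, which form a basis of $\C^n$ because $N_n$ is a regular (single-Jordan-block) nilpotent matrix. Therefore $\C[t^{\pm1}]\cdot\mathbf e_1 = \C^n$. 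Since $\C^n$ is cyclic with annihilator $(t-\alpha)^n$, the structure theorem for modules over a PID gives the desired isomorphism $\C^n\cong\C[t^{\pm1}]/(t-\alpha)^n$ sending $p(t)\mapsto p(t)\cdot\mathbf e_1$.

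\emph{The only mild subtlety}, rather than a real obstacle, is bookkeeping: one must fix whether $N_n$ is the super- or sub-diagonal nilpotent Jordan block and whether vectors act on the left or right of $J_n$, and check that $t^k$ as defined in \eqref{eq:action-J} really does define a $\C[t^{\pm1}]$-module action (i.e.\ $t^k t^\ell = t^{k+\ell}$ as operators, which holds since the matrices $J_n^k$ commute and $\alpha^k\alpha^\ell = \alpha^{k+\ell}$, and $t^0 = \mathrm{id}$, $t$ invertible because $J_n$ and $\alpha$ are). Once these conventions are pinned down the computation $(t-\alpha)\cdot\mathbf a = \alpha\,\mathbf a\,N_n$ is immediate and the rest follows as above. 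This is precisely the ``companion matrix / cyclic vector'' picture for a single Jordan block, so I expect no genuine difficulty.
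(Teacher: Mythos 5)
Your proof is correct, and it fills in precisely what the paper leaves implicit: the paper gives no proof beyond asserting that the lemma ``follows easily from the Jordan normal form theorem,'' and the cyclic-vector computation you carry out (identifying $(t-\alpha)$ with $\alpha N_n$, noting $\mathbf e_1 N_n^j=\mathbf e_{j+1}$, and reading off the annihilator $(t-\alpha)^n$) is the standard unwinding of that one-liner. No discrepancy with the paper's approach.
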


There is a direct method to construct a reducible metabelian representations of 
$\C[t^{\pm1}] / (t-\alpha)^{n-1} \rtimes\Z$ into $\GLn[n,\C]$ (see \cite[Proposition~3.13]{Boden-Friedl2008}).
A direct calculation gives that
\[ (\mathbf{a},0)\mapsto
\begin{pmatrix}
1 & \mathbf{a} \\
\mathbf0 & I_{n-1}
\end{pmatrix} ,\quad (0,1) \mapsto 
\begin{pmatrix}
\alpha & \mathbf{0}  \\
\mathbf0 & J_{n-1}^{-1} 
\end{pmatrix}
\] 
defines a faithful representation  $\C[t^{\pm1}] / (t-\alpha)^{n-1}\rtimes\Z\to\GLn[n,\C]$.

Therefore, we obtain  a reducible, metabelian, non-abelian representation 
$\tilde\metrep\co\Gamma\to\GLn[n,\C]$ if
the Alexander module $H_1(X_\infty,\C)$ has a direct summand  of the form 
$\C[t^{\pm1}]\big/(t-\alpha)^s$ with $s\geq n-1 \geq 1$:
\begin{multline*} 
\tilde \metrep\co
\Gamma \cong \Gamma'\rtimes\Z\to
\Gamma'/\Gamma''\rtimes\Z \to  (\C\otimes\Gamma'/\Gamma'')\rtimes\Z\to   \\
\C[t^{\pm1}]\big/(t-\alpha)^s\rtimes\Z\to
\C[t^{\pm1}]\big/(t-\alpha)^{n-1}\rtimes\Z\to\GLn[n,\C]
\end{multline*}
given by
\begin{equation}\label{eq:rep-alt-gln}
 \tilde\metrep(\gamma) =
\begin{pmatrix}
1 & \tilde {\mathbf{z}}(\gamma)\\
0& I_{n-1}
\end{pmatrix}
\begin{pmatrix}
\alpha^{h(\gamma)} & 0\\
0& J^{-h(\gamma)}_{n-1}
\end{pmatrix}.
\end{equation}
It is easy to see that a map $\tilde\varrho\co\Gamma\to\GLn$ given by \eqref{eq:rep-alt-gln} is a homomorphism if and only if 
$\tilde{\mathbf{z}}\co\Gamma\to \C^{n-1}$ is a cocycle i.e.\ for all $\gamma_1,\gamma_2\in\Gamma$ we have
\begin{equation} \label{eq:tildez}
\tilde{\mathbf{z}}(\gamma_1\gamma_2) = \tilde{\mathbf{z}}(\gamma_1)
+\alpha^{h(\gamma_1)} \tilde{\mathbf{z}}(\gamma_2) J_{n-1}^{h(\gamma_1)}\,.
\end{equation}

For a better description of the cocycle $\tilde{\mathbf{z}}$, we introduce the following notations:
for $m,\ k\in\Z$, $k\geq 0$, we define
\begin{equation}\label{eq:def-h_k}
h_k(\gamma) :=  {h(\gamma)\choose k}
\quad\text{ where }\quad 
 {m\choose k} :=
 \begin{cases}
 \frac{m(m-1)\cdots(m-k+1)}{ k! } & \text{ if $k>0$}\\
 1 & \text{ if $k=0$.}
 \end{cases}
\end{equation}
It follows directly from the properties of the binomial coefficients that for each $k\in\Z$, $k\geq0$, the cochains $h_k\in C^1(\Gamma;\C)$ are defined and verify:
\begin{equation}\label{eq:h_k}
 \delta h_k + \sum_{i=1}^{k-1} h_i\smallcup h_{k-i} =0.
\end{equation}

\begin{lemma} \label{lem:tildez}
Let $\tilde{\mathbf{z}}\co\Gamma\to \C^{n-1}$ be a map verifying \eqref{eq:tildez} and
let $\tilde z_k\co\Gamma\to \C_\alpha$, $\tilde {\mathbf{z}}=(\tilde z_1,\ldots,\tilde z_{n-1})$, denote
the components of $\tilde {\mathbf{z}}$. Then the cochains $\tilde z_k$, $1\leq k\leq n-1$, satisfy
\[
  \delta \tilde{z}_k + \sum_{i=1}^{k-1} h_i\smallcup \tilde{z}_{k-i} = 0\,.
\]
In particular $\tilde{z}_1\co\Gamma\to\C_\alpha$ is a cocycle.
\end{lemma}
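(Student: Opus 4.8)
The plan is to read off the $k$-th coordinate of the defining identity \eqref{eq:tildez} after expanding the power $J_{n-1}^{h(\gamma_1)}=(I_{n-1}+N_{n-1})^{h(\gamma_1)}$ by the binomial theorem. Since $I_{n-1}$ and $N_{n-1}$ commute and $N_{n-1}$ is nilpotent of degree $n-1$, the map $m\mapsto J_{n-1}^m$ is a homomorphism $\Z\to\GLn[n-1,\C]$ which agrees with $m\mapsto\sum_{j\geq0}\binom{m}{j}N_{n-1}^j$ for $m\geq 0$, hence for all $m\in\Z$; here $\binom{m}{j}$ is the generalized binomial coefficient of \eqref{eq:def-h_k}. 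As the $(a,b)$-entry of $N_{n-1}^j$ equals $1$ when $b-a=j$ and $0$ otherwise, the $(a,b)$-entry of $J_{n-1}^{h(\gamma_1)}$ is $h_{b-a}(\gamma_1)$ for $b\geq a$ and $0$ otherwise, with the convention $h_0\equiv 1$. Consequently the $k$-th component of the row vector $\tilde{\mathbf z}(\gamma_2)\,J_{n-1}^{h(\gamma_1)}$ equals $\sum_{i=1}^{k}\tilde z_i(\gamma_2)\,h_{k-i}(\gamma_1)$, so isolating the $i=k$ term the identity \eqref{eq:tildez} becomes, for each $k$ with $1\leq k\leq n-1$,
\[
\tilde z_k(\gamma_1\gamma_2) = \tilde z_k(\gamma_1) + \alpha^{h(\gamma_1)}\tilde z_k(\gamma_2) + \alpha^{h(\gamma_1)}\sum_{i=1}^{k-1} h_{k-i}(\gamma_1)\,\tilde z_i(\gamma_2)\,.
\]

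Next I would unwind the two terms in the claimed identity. By the definition of the coboundary on $\C_\alpha$ one has $\delta\tilde z_k(\gamma_1,\gamma_2)=\alpha^{h(\gamma_1)}\tilde z_k(\gamma_2)-\tilde z_k(\gamma_1\gamma_2)+\tilde z_k(\gamma_1)$. Since each $h_i$ is a cochain with values in the trivial module $\C$, the formula \eqref{eq:cup} combined with the product map $\C\otimes\C\to\C$ gives
\[
h_i\smallcup\tilde z_{k-i}(\gamma_1,\gamma_2) = h_i(\gamma_1)\,\bigl(\gamma_1\circ\tilde z_{k-i}(\gamma_2)\bigr) = h_i(\gamma_1)\,\alpha^{h(\gamma_1)}\,\tilde z_{k-i}(\gamma_2)\,.
\]
Summing over $i$ from $1$ to $k-1$ and reindexing by $j=k-i$ turns $\sum_{i=1}^{k-1}h_i\smallcup\tilde z_{k-i}$ into $\alpha^{h(\gamma_1)}\sum_{j=1}^{k-1}h_{k-j}(\gamma_1)\,\tilde z_j(\gamma_2)$, which is exactly the correction term in the rewritten \eqref{eq:tildez} above. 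Substituting the rewritten \eqref{eq:tildez} for $-\tilde z_k(\gamma_1\gamma_2)+\tilde z_k(\gamma_1)$ inside $\delta\tilde z_k(\gamma_1,\gamma_2)$, all terms cancel and one obtains $\delta\tilde z_k + \sum_{i=1}^{k-1} h_i\smallcup\tilde z_{k-i}=0$. For $k=1$ the sum is empty, so this reads $\delta\tilde z_1=0$, i.e.\ $\tilde z_1\in Z^1(\Gamma;\C_\alpha)$.

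The argument is essentially bookkeeping and I do not expect any genuine obstacle; the only points requiring care are matching conventions so that the binomial coefficients line up as the $h_{k-i}$: namely that $\tilde{\mathbf z}(\gamma_2)$ is multiplied on the \emph{right} by $J_{n-1}^{h(\gamma_1)}$ (a row-vector action) versus the way the entries of $N_{n-1}^j$ are indexed, and that the cup product \eqref{eq:cup} carries no sign in this bidegree. Once these are fixed the cancellation is immediate.
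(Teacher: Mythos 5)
Your proof is correct and follows essentially the same route as the paper's: expand $J_{n-1}^{h(\gamma_1)} = \sum_{j\geq 0}\binom{h(\gamma_1)}{j}N_{n-1}^j$, read off the $k$-th component of the defining identity, and match it term-by-term with $\delta\tilde z_k$ and the cup products $h_i\smallcup\tilde z_{k-i}$. The extra care you take in spelling out the entries of $N_{n-1}^j$ and checking the sign/degree conventions for the cup product is appropriate but not a departure from the paper's argument.
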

\begin{proof}
Note that $h_0\equiv 1$, $h_1=h$, 
$J_{n-1}^m=(I_{n-1}+N_{n-1})^m = \sum_{i\geq0}  {m\choose i} N^i_{n-1}$ and
$(x_1,\ldots,x_{n-1}) J_{n-1}^m = (x'_1, x'_2 ,\ldots, x'_{n-1})$  where 
\[
x'_k = \sum_{i=0}^{k-1} {m\choose i} x_{k-i} = x_k + \sum_{i=1}^{k-1} {m\choose i} x_{k-i}\,.
\]

It follows from this formula that $\tilde{\mathbf{z}}(\gamma_1\gamma_2) = \tilde{\mathbf{z}}(\gamma_1)
+\alpha^{h(\gamma_1)} \tilde{\mathbf{z}}(\gamma_2) J_{n-1}^{h(\gamma_1)}$ holds
if and only if for $k=1,\ldots,n-1$ we have
\[  
\tilde{z}_k (\gamma_1\gamma_2) = 
\tilde{z}_k(\gamma_1)+ \alpha^{h(\gamma_1)} \tilde{z}_{k}(\gamma_2)+
\sum_{i=1}^{k-1} h_i(\gamma_1)\,\alpha^{h(\gamma_1)}\tilde{z}_{k-i}(\gamma_2) \,.
\]
In other words $0=\delta \tilde{z}_{k} + \sum_{i=1}^{k-1} h_i\smallcup\tilde{z}_{k-i}$ holds.
\end{proof}

From now on we will suppose that for $\alpha\in\C^*\setminus\{1\}$ the $(t-\alpha)$-torsion of the Alexander module is cyclic of the form \[\tau_\alpha=\C[t^{\pm1}]\big/(t-\alpha)^{n-1}\,,\qquad\text{where}\ n\geq 2\,.\]
This is equivalent to the fact that $\alpha$ is a root of the Alexander polynomial $\Delta_K(t)$ 
of multiplicity $n-1$ and that $\dim H^1(\Gamma;\C_\alpha)=1$ 
(see Lemma~\ref{lem:H1GammaC}). Let us recall also that by Lemma~\ref{lem:H1GammaC}, 
the following dimension formulas hold:
\begin{align}\label{eq:dimHGammaC}
\dim H^q(\Gamma;\C) =
\begin{cases}
1 & \text{ for $q=0,1$;}\\
0 & \text{ for $q\geq2$,}
\end{cases}
\intertext{and}
\label{eq:dimHGammaCa}
\dim H^q(\Gamma;\C_{\alpha^{\pm1}}) =
\begin{cases}
1 & \text{ for $q=1,2$;}\\
0 & \text{ for $q\neq1,2$.}
\end{cases}
\end{align}

\begin{remark}\label{rem:blanchfield}
Notice that by Blanchfield-duality the 
$(t-\alpha^{-1})$-torsion of the Alexander module $H_1(\Gamma; \C[t^{\pm1}])$ is also of the form
$$\tau_{\alpha^{-1}} = \C[t^{\pm1}]/(t-\alpha^{-1})^{n-1}.$$ 
More precisely, 
the Alexander polynomial $\Delta_K(t)$ is symmetric and hence $\alpha^{-1}$ is also a root of 
 $\Delta_K(t)$ of multiplicity $n-1$ and  $\dim H^1(\Gamma;\C_{\alpha^{-1}})=1$.
\end{remark}

Let $ \tilde\metrep\co\Gamma\to\GLn$ be a representation given by \eqref{eq:rep-alt-gln} i.e.\
for all $\gamma\in\Gamma$ we have
\[
 \tilde\metrep(\gamma) =
\begin{pmatrix}
1 & \tilde {\mathbf{z}}(\gamma)\\
0& I_{n-1}
\end{pmatrix}
\begin{pmatrix}
\alpha^{h(\gamma)} & 0\\
0& J^{-h(\gamma)}_{n-1}
\end{pmatrix}.
\]
We will say that $\tilde\metrep$ can be \emph{upgraded} to a representation into $\GLn[n+1,\C]$ if there is a cochain $\tilde z_n\co\Gamma\to\C_\alpha$ such that the map $\Gamma\to\GLn[n+1,\C]$
given by
\[
\gamma \mapsto
\begin{pmatrix}
1 & (\tilde{\mathbf{z}}(\gamma),\tilde z_n(\gamma))\\
0& I_{n}
\end{pmatrix}
\begin{pmatrix}
\alpha^{h(\gamma)} & 0\\
0& J^{-h(\gamma)}_{n}
\end{pmatrix}
\]
is a representation.

\begin{lemma}\label{lem:no-promotion}
Suppose that the $(t-\alpha)$-torsion of the Alexander module is cyclic of the form
$\tau_\alpha=\C[t^{\pm1}]\big/(t-\alpha)^{n-1}$, $n\geq2$ and let 
$\tilde \metrep\co\Gamma\to\GLn[n,\C]$ be a representation given by \eqref{eq:rep-alt-gln}.

Then $\tilde\metrep$ cannot be upgraded to a representation into $\GLn[n+1,\C]$   unless
$\tilde z_1\co\Gamma\to\C_\alpha$ is a coboundary.
\end{lemma}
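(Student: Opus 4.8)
The plan is to translate the upgrading condition into a cohomological statement and then use the structure of $\tau_\alpha$. Suppose $\tilde\metrep$ can be upgraded, so there is a cochain $\tilde z_n\co\Gamma\to\C_\alpha$ making the enlarged map a homomorphism. By exactly the same computation as in Lemma~\ref{lem:tildez} (expanding the product of the two block upper-triangular matrices and comparing the last column), the map into $\GLn[n+1,\C]$ is a representation if and only if, in addition to the relations already satisfied by $\tilde z_1,\ldots,\tilde z_{n-1}$, the new component satisfies
\[
\delta \tilde z_n + \sum_{i=1}^{n-1} h_i\smallcup \tilde z_{n-i} = 0\,.
\]
So the obstruction to upgrading is precisely the class of the $2$-cocycle $\Omega := \sum_{i=1}^{n-1} h_i\smallcup \tilde z_{n-i}$ in $H^2(\Gamma;\C_\alpha)$: the upgrade exists if and only if $[\Omega]=0$. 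The first step is therefore to verify that $\Omega$ is indeed a cocycle (which follows from \eqref{eq:h_k}, \eqref{eq:tildez} and the Leibniz rule for $\smallcup$) and to set up the reduction: I must show that if $\tilde z_1$ is not a coboundary, then $[\Omega]\neq 0$.

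Next I would exploit the hypothesis $\tau_\alpha=\C[t^{\pm1}]/(t-\alpha)^{n-1}$, which by Lemma~\ref{lem:H1GammaC} gives $\dim H^1(\Gamma;\C_\alpha)=\dim H^2(\Gamma;\C_\alpha)=1$, together with the known fact (the $n=3$ case of \cite{BenAbdelghani-Lines2002}, or its analogue) that $h\smallcup(\cdot)=h_1\smallcup(\cdot)$ induces an isomorphism $H^1(\Gamma;\C_\alpha)\xrightarrow{\sim} H^2(\Gamma;\C_\alpha)$ — this is the cup product with the generator $h$ of $H^1(\Gamma;\C)$, and its nondegeneracy is essentially the statement that $\tau_\alpha$ is a single Jordan block. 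Since $\tilde z_1$ is a cocycle representing a nonzero class (it generates $H^1(\Gamma;\C_\alpha)$), the term $h_1\smallcup\tilde z_{n-1}$ alone would already be cohomologically nontrivial if $\tilde z_{n-1}$ represented a nonzero class; the subtlety is that $\Omega$ is a \emph{sum} of $n-1$ cup products and I need to see that it cannot vanish cohomologically. The cleanest route is an induction on $n$, or equivalently a filtration argument: the relations of Lemma~\ref{lem:tildez} say that $\tilde z_{k}$ is, up to a coboundary, built recursively from $\tilde z_1$ via the $h_i$'s, so one shows by descending manipulation that $[\Omega]$ equals a nonzero multiple of the image of $[\tilde z_1]$ under the iterated-cup-product/Bockstein map associated to the length-$(n-1)$ Jordan block, which is an isomorphism onto $H^2$ precisely under the cyclicity hypothesis.

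Concretely I would argue as follows. Consider the short exact sequence of $\Gamma$-modules $0\to\C_\alpha\to\C[t^{\pm1}]/(t-\alpha)^{n}\to\C[t^{\pm1}]/(t-\alpha)^{n-1}\to0$ (using Lemma~\ref{lem:iso} to realize these as $\C^k$ with the $J_k$-action). The cocycle $\tilde{\mathbf z}=(\tilde z_1,\ldots,\tilde z_{n-1})$ is exactly a $1$-cocycle with values in $\C^{n-1}\cong\C[t^{\pm1}]/(t-\alpha)^{n-1}$, and the obstruction to lifting it to a $1$-cocycle with values in $\C[t^{\pm1}]/(t-\alpha)^n$ is its image under the Bockstein $\beta^1\co H^1(\Gamma;\C[t^{\pm1}]/(t-\alpha)^{n-1})\to H^2(\Gamma;\C_\alpha)$; unwinding the snake lemma, this Bockstein image is represented precisely by $\Omega=\sum_{i=1}^{n-1}h_i\smallcup\tilde z_{n-i}$ (this is the content of the explicit formula, compare Remark~\ref{rem:bockstein}). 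Now Lemma~\ref{lem:H1GammaC} applied to the cyclic module $\tau_\alpha$ shows that $H^1(\Gamma;\C[t^{\pm1}]/(t-\alpha)^{n-1})$ and $H^1(\Gamma;\C_\alpha)$ are both $1$-dimensional, and the long exact sequence forces $\beta^1$ to be injective on the part not coming from $\C[t^{\pm1}]/(t-\alpha)^n$ — equivalently, since $H^1(\Gamma;\C[t^{\pm1}]/(t-\alpha)^n)$ is still $1$-dimensional while $H^2(\Gamma;\C_\alpha)$ is $1$-dimensional and $H^1(\Gamma;\C_\alpha)\to H^1(\Gamma;\C[t^{\pm1}]/(t-\alpha)^{n})$ is an isomorphism in degree one by dimension count, the connecting map $\beta^1$ is nonzero, hence injective on the class $[\tilde{\mathbf z}]$ unless that class already lifts. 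But $[\tilde{\mathbf z}]$ lifts to $\C[t^{\pm1}]/(t-\alpha)^n$ if and only if $\tilde z_1$ (its image in $H^1(\Gamma;\C_\alpha)$ under the quotient $\C[t^{\pm1}]/(t-\alpha)^{n-1}\to\C_\alpha$ sending the block to its top entry, i.e.\ the leading component) is a coboundary — because that quotient induces an isomorphism on $H^1$ by the dimension formulas. Therefore $[\Omega]=\beta^1[\tilde{\mathbf z}]\neq0$ whenever $\tilde z_1$ is not a coboundary, and the upgrade is impossible.

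\textbf{Main obstacle.} The delicate point is making the identification "$\Omega$ represents $\beta^1[\tilde{\mathbf z}]$" fully rigorous: one has to pick the lift of $\tilde{\mathbf z}$ to $C^1(\Gamma;\C[t^{\pm1}]/(t-\alpha)^n)$, apply $\delta$, and check that the resulting element of $C^2(\Gamma;\C_\alpha)$ is exactly $\sum_{i=1}^{n-1}h_i\smallcup\tilde z_{n-i}$ — this uses the binomial identity \eqref{eq:h_k} and the explicit form of the $J_n$-action, and it is the one genuinely computational step. The surrounding homological bookkeeping (that $\beta^1$ detects precisely the obstruction, and that the relevant $H^1$ maps are isomorphisms by the dimension formulas \eqref{eq:dimHGammaCa}) is then formal.
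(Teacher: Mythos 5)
Your reformulation of the obstruction as a Bockstein class is correct and genuinely different from the paper's argument: the paper never invokes the Bockstein for this lemma, but instead applies the universal coefficient theorem directly to the module $\C[t^{\pm1}]/(t-\alpha)^{l}$ with $l>n-1$. Your computation identifying $\beta^1[\tilde{\mathbf{z}}]$ with $[\Omega]=\big[\sum_{i=1}^{n-1}h_i\smallcup\tilde z_{n-i}\big]$ (by lifting $(\tilde z_1,\dots,\tilde z_{n-1})$ to $(\tilde z_1,\dots,\tilde z_{n-1},0)$ and taking the last coordinate of its coboundary) is sound. However, the dimension counts you then use are wrong, and this is a genuine gap, not just loose bookkeeping. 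By Lemma~\ref{lem:H1GammaC}, when $\tau_\alpha=\C[t^{\pm1}]/(t-\alpha)^{n-1}$ one has $\dim H^1(\Gamma;\C[t^{\pm1}]/(t-\alpha)^k)=\min(k,n-1)$; hence both $H^1(\Gamma;\C[t^{\pm1}]/(t-\alpha)^{n-1})$ and $H^1(\Gamma;\C[t^{\pm1}]/(t-\alpha)^{n})$ have dimension $n-1$, not $1$. Consequently, for $n>2$, neither $H^1(\Gamma;\C_\alpha)\to H^1(\Gamma;\C[t^{\pm1}]/(t-\alpha)^{n})$ nor the map induced on $H^1$ by the quotient $\C[t^{\pm1}]/(t-\alpha)^{n-1}\to\C_\alpha$ is an isomorphism, and the inference ``$\beta^1$ is nonzero, hence injective'' fails: $\beta^1$ is a surjection of an $(n-1)$-dimensional space onto a $1$-dimensional one, so its kernel has dimension $n-2$.

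The claim you actually need is $\ker\beta^1=\ker q_*$, where $q_*\co H^1(\Gamma;\C[t^{\pm1}]/(t-\alpha)^{n-1})\to H^1(\Gamma;\C_\alpha)$ is induced by projection onto the leading coordinate, so that $q_*[\tilde{\mathbf{z}}]=[\tilde z_1]$. That equality is true, but establishing it is precisely where the cyclicity of $\tau_\alpha$ enters, and it requires the universal-coefficient identification $H^1(\Gamma;\C[t^{\pm1}]/(t-\alpha)^l)\cong\hom_{\C[t^{\pm1}]}(\tau_\alpha,\C[t^{\pm1}]/(t-\alpha)^l)\cong(t-\alpha)^{l-n+1}\C[t^{\pm1}]/(t-\alpha)^l$ used by the paper: for $l\geq n-1$ every degree-one class is represented by a cocycle whose first $l-n+1$ coordinates vanish, which identifies both $\image(p_*)=\ker\beta^1$ and $\ker q_*$ with the $(n-2)$-dimensional subspace of classes with vanishing leading coordinate. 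The paper applies this directly with $l=n$ and is done in a line; once you have that identification, the Bockstein step becomes a detour around the same fact. In short, what you labelled ``formal homological bookkeeping'' is where the substance lies, and your dimension argument for it does not hold.
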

\begin{proof}
By Lemma~\ref{lem:iso} the $\C[t^{\pm1}]$-module $\C^{n-1}$ with the action given by 
$t\,\mathbf{a} = \alpha\, \mathbf{a}\, J_{n-1}$ is isomorphic to $\C[t^{\pm1}]/(t-\alpha)^{n-1}$. 
Hence it follows from the universal coefficient theorem that for $l \geq n-1$ we have:
\begin{align*}
H^1(\Gamma; \C[t^{\pm1}]/(t-\alpha)^{l}) & \cong
\mathrm{Hom}_{\C[t^{\pm1}]} \big( H_1(\Gamma; \C[t^{\pm1}]), \C[t^{\pm1}]/(t-\alpha)^{l} \big) \\ &\cong
\mathrm{Hom}_{\C[t^{\pm1}]} \big(\C[t^{\pm1}]/(t-\alpha)^{n-1}, \C[t^{\pm1}]/(t-\alpha)^{l} \big) \\ &\cong
(t-\alpha)^{l-n+1}  \C[t^{\pm1}]/(t-\alpha)^{l} \cong  \C[t^{\pm1}]/(t-\alpha)^{n-1}\,.
\end{align*}
Hence if $l>n-1$ then every cocycle $\tilde z\co\Gamma\to\C[t^{\pm1}]/(t-\alpha)^{l}$, given by
$\tilde z(\gamma)=(\tilde z_{1}(\gamma),\ldots,\tilde z_{l}(\gamma))$ 
is cohomologous to a cocycle for which the first $l-n+1$ components vanish. 
This proves the conclusion of the lemma.
\end{proof}

Notice that the unipotent matrices 
$J_{n}$ and $J_{n}^{-1}$ are similar: a direct calculation shows that $P_n J_n P_n^{-1} =J_n^{-1} $ where $P_n=(p_{ij})$, $p_{ij}=(-1)^j {j\choose i}$.
The matrix $P_n$ is upper triangular with $\pm1$ in the diagonal and $P_n^2$ is the identity matrix, and therefore $P_n=P_n^{-1}$.

Hence $\tilde\metrep$ is conjugate to a representation 
$\metrep\co\Gamma\to\GLn[n,\C]$ given by
\begin{equation}\label{eq:rep-gln}
\metrep(\gamma) =
\begin{pmatrix}
\alpha^{h(\gamma)} &  z(\gamma)\\
0& J^{h(\gamma)}_{n-1}
\end{pmatrix}
=\begin{pmatrix}
\alpha^{h(\gamma)}&z_1(\gamma)&z_2(\gamma)&\ldots&z_{n-1}(\gamma)\\
0&1&h_1(\gamma)&\ldots&h_{n-2}(\gamma)\\
\vdots&\ddots&\ddots&\ddots&\vdots\\
\vdots&&\ddots&1&h_1(\gamma)\\
0&\ldots&\ldots&0&1
\end{pmatrix}
\end{equation}
where $\mathbf{z}=(z_1,\ldots,z_{n-1})\co\Gamma\to\C^{n-1}$ satisfies
\[
\mathbf{z}(\gamma_1\gamma_2) =
\alpha^{h(\gamma_1)}\mathbf{z}(\gamma_2) + \mathbf{z}(\gamma_1)J_{n-1}^{h(\gamma_2)}\,.
\]
It follows directly that $\mathbf{z}(\gamma)=\tilde{\mathbf{z}}(\gamma) P_{n-1} J_{n-1}^{h(\gamma)}$ and in particular $z_1= - \tilde z_1$.

The same argument as in the proof of Lemma~\ref{lem:tildez} shows that the cochains $z_k\co\Gamma\to\C_\alpha$  verify:
\[ \delta z_k + \sum_{i=1}^{k-1} z_i\smallcup h_{k-i} =0 \quad 
\text{for $k=1,\ldots,n-1$.}
\]

Therefore, the representation $\metrep\co\Gamma\to\GLn[n,\C]$ can be upgraded into a representation $\Gamma\to\GLn[n+1,\C]$ if and only if $\sum_{i=1}^{n-1} z_i\smallcup h_{n-i}$ is a coboundary.

Hence we obtain the following: 
\begin{prop}\label{prop:no-promotion}
Suppose that the $(t-\alpha)$-torsion of the Alexander module is cyclic of the form
$\tau_\alpha=\C[t^{\pm1}]\big/(t-\alpha)^{n-1}$, $n\geq2$.
Let $\tilde\metrep,\metrep\co\Gamma\to\GLn[n,\C]$ be the representations given by 
\eqref{eq:rep-alt-gln} and  \eqref{eq:rep-gln} respectively where 
$\tilde{z}_1= - z_1\co\Gamma\to\C_\alpha$ is a non-principal derivation.
Then the representations $\tilde\metrep$ and $\metrep$
can not be upgraded to  representations
$\Gamma\to\GLn[n+1,\C]$ i.e. the cocycles
\[
\sum_{i=1}^{n-1} h_i\smallcup\tilde{z}_{n-i}\quad\text{ and}\quad
\sum_{i=1}^{n-1} z_i\smallcup h_{n-i}
\]
represent  nontrivial cohomology classes in $H^2(\Gamma;\C_\alpha)$.
\end{prop}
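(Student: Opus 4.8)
The plan is to deduce the Proposition directly from Lemma~\ref{lem:no-promotion} together with the discussion that precedes it, which reduces the obstruction to upgrading to the (non)vanishing of a single cohomology class. First I would make that reduction explicit. By Lemma~\ref{lem:tildez} and its mirror version, the components of the cocycles defining $\tilde\metrep$ and $\metrep$ satisfy $\delta\tilde z_k + \sum_{i=1}^{k-1} h_i\smallcup\tilde z_{k-i} = 0$ and $\delta z_k + \sum_{i=1}^{k-1} z_i\smallcup h_{k-i} = 0$ for $1\le k\le n-1$. Spelling out when the maps \eqref{eq:rep-alt-gln} and \eqref{eq:rep-gln} extend over the $(n+1)\times(n+1)$ Jordan block is exactly the condition that there exist cochains $\tilde z_n, z_n\co\Gamma\to\C_\alpha$ with $\delta\tilde z_n = -\sum_{i=1}^{n-1} h_i\smallcup\tilde z_{n-i}$ and $\delta z_n = -\sum_{i=1}^{n-1} z_i\smallcup h_{n-i}$. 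Hence $\tilde\metrep$ (resp.\ $\metrep$) can be upgraded to $\GLn[n+1,\C]$ if and only if $\omega := \sum_{i=1}^{n-1} h_i\smallcup\tilde z_{n-i}$ (resp.\ $\omega' := \sum_{i=1}^{n-1} z_i\smallcup h_{n-i}$) is a coboundary in $C^2(\Gamma;\C_\alpha)$.

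Next I would check that $\omega$ and $\omega'$ are genuine $2$-cocycles, so that ``not a coboundary'' is the same as ``represents a nonzero class in $H^2(\Gamma;\C_\alpha)$''. For $\omega$ this is a short computation: apply $\delta$, use the Leibniz rule $\delta(u\smallcup v) = (-1)^{q}\,\delta u\smallcup v + u\smallcup\delta v$ (with $q=\deg v=1$) recalled above, substitute $\delta h_i = -\sum_a h_a\smallcup h_{i-a}$ from \eqref{eq:h_k} and $\delta\tilde z_{n-i} = -\sum_b h_b\smallcup\tilde z_{n-i-b}$ from Lemma~\ref{lem:tildez}, and note that after using associativity of the cup product at the cochain level and reindexing, the two resulting triple sums of the shape $\sum h_a\smallcup h_c\smallcup\tilde z_{n-a-c}$ agree and enter with opposite signs, so that $\delta\omega = 0$. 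The same computation applies to $\omega'$.

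The conclusion then follows from Lemma~\ref{lem:no-promotion}. By hypothesis $\tilde z_1 = -z_1\co\Gamma\to\C_\alpha$ is a non-principal derivation, so Lemma~\ref{lem:no-promotion} says $\tilde\metrep$ cannot be upgraded to $\GLn[n+1,\C]$; by the first paragraph $\omega$ is therefore not a coboundary and, being a cocycle, represents a nonzero class in $H^2(\Gamma;\C_\alpha)$. For $\metrep$ I would argue the same way. One option is to note that $\metrep$ is conjugate to $\tilde\metrep$ (by $\mathrm{diag}(1,P_{n-1})$) and that the corresponding $(n+1)$-dimensional forms are conjugate by $\mathrm{diag}(1,P_n)$, whose top-left $(n-1)\times(n-1)$ block is $P_{n-1}$; hence the two upgrading problems are equivalent and the claim for $\omega'$ follows from that for $\omega$. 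Alternatively, one reruns the proof of Lemma~\ref{lem:no-promotion} with the $\C[t^{\pm1}]$-module $\C^{n-1}$ carrying the action $t\,\mathbf a = \alpha\,\mathbf a\,J_{n-1}$, which by Lemma~\ref{lem:iso} is again $\C[t^{\pm1}]/(t-\alpha)^{n-1}$, to see directly that $\metrep$ cannot be upgraded unless $z_1$ is a coboundary.

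I do not expect a genuine obstacle in this proof: the real content is already packaged in Lemma~\ref{lem:no-promotion}. The only points requiring care are the cocycle verification of the second paragraph --- a bookkeeping exercise in signs and in reindexing triple cup products --- and making the equivalence between upgrading $\metrep$ and upgrading $\tilde\metrep$ precise, which is purely formal and rests on the similarity of $J_n$ and $J_n^{-1}$ via $P_n$.
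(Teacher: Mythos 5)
Your proposal is correct and follows the same route the paper takes (its proof is literally the one‑liner ``The proposition follows from Lemma~\ref{lem:no-promotion} and the above considerations''); the only difference is that you make explicit two points the paper leaves implicit, namely the verification that $\sum h_i\smallcup\tilde z_{n-i}$ and $\sum z_i\smallcup h_{n-i}$ are genuine $2$-cocycles and the precise conjugation by $\mathrm{diag}(1,P_n)$ identifying the two upgrading problems, both of which check out.
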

\begin{proof}
The proposition follows from Lemma~\ref{lem:no-promotion} and the above considerations.
\end{proof}

\section{Cohomological computations}
\label{sec:CohomologicalComputations}
We suppose throughout this section that $K\subset S^3$ is a knot and that the $(t-\alpha)$-torsion of its Alexander module is cyclic of the form $\tau_\alpha=\C[t,t^{-1}]\big/(t-\alpha)^{n-1}$, $n\geq2$, where $\alpha\in\C^*$ is a nonzero complex number.
Let $\metrep\co\Gamma\to\GLn$ be a representation given by 
\eqref{eq:rep-gln} where $z_1\co\Gamma\to\C_\alpha$ is a non-principal derivation:
\[ \metrep(\gamma) =
\begin{pmatrix}
\alpha^{h(\gamma)} &  z(\gamma)\\
0& J^{h(\gamma)}_{n-1}
\end{pmatrix}
=\begin{pmatrix}
\alpha^{h(\gamma)}&z_1(\gamma)&z_2(\gamma)&\ldots&z_{n-1}(\gamma)\\
0&1&h_1(\gamma)&\ldots&h_{n-2}(\gamma)\\
\vdots&\ddots&\ddots&\ddots&\vdots\\
\vdots&&\ddots&1&h_1(\gamma)\\
0&\ldots&\ldots&0&1
\end{pmatrix}.
\]

We choose an $n$-th root $\lambda$ of $\alpha=\lambda^n$ and we define a reducible metabelian representation 
$\metrep_\lambda\co\Gamma\to\SLn$ by
\begin{equation}\label{eq:rep-sln}
\metrep_\lambda(\gamma) = \lambda^{- h(\gamma)} \metrep(\gamma)
\end{equation}

The aim of the following sections is to 
calculate the cohomological groups of $\Gamma$ with coefficients in the Lie algebra $\sln_{\Ad\circ\metrep_\lambda}$.
Notice that the action of $\Gamma$ via $\Ad\circ \metrep$ and $\Ad\circ \metrep_\lambda$ preserve $\sln$ and coincide 
since the center of $\GLn$ is the kernel of $\Ad\co\mathrm{GL}(n)\to\mathrm{Aut}(\gln)$. Hence we have the following isomorphisms of 
$\Gamma$-modules:
\begin{equation}\label{eqn:center}
\sln_{\Ad\circ\metrep_\lambda}\cong\sln_{\Ad\circ\metrep} \quad\text{ and }\quad
\gln_{\Ad\circ\metrep} = \sln_{\Ad\circ\metrep} \oplus \C\, I_n
\end{equation}
where $\Gamma$ acts trivially on the center ${\C}I_n$ of $\gln$.
We will prove the following result:
\begin{prop}\label{prop:dimensions}
Let $K\subset S^3$ be a knot and suppose that the $(t-\alpha)$-torsion of the Alexander module of $K$ is of the form $\displaystyle\tau_\alpha=\C[t^{\pm1}]\big/(t-\alpha)^{n-1}$. Then for the representation $\metrep_\lambda\co\Gamma\to\SLn$ we have
$H^0(\Gamma;\sln_{\Ad\circ\metrep_\lambda})=0$ and
\[
\dim H^1(\Gamma;\sln_{\Ad\circ\metrep_\lambda})=\dim H^2(\Gamma;\sln_{\Ad\circ\metrep_\lambda})=n-1\,.
\]
\end{prop}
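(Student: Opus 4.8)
The plan is to analyze the $\Gamma$-module $\sln_{\Ad\circ\metrep_\lambda}\cong\sln_{\Ad\circ\metrep}$ by exploiting the block-upper-triangular shape of $\metrep$ in \eqref{eq:rep-gln}. Writing $\C^n = \C e_0 \oplus W$ where $W = \langle e_1,\dots,e_{n-1}\rangle$ carries the action $\mathbf a \mapsto \mathbf a\,J_{n-1}^{h(\gamma)}$ (so $W\cong\C[t^{\pm1}]/(t-\alpha)^{n-1}$ up to the $\lambda$-twist, by Lemma~\ref{lem:iso}) and $\C e_0\cong\C_{\lambda^{-(n-1)}}$ after the scalar twist, one gets $\gln_{\Ad\circ\metrep} \cong \hom(\C^n,\C^n) \cong \hom(W,W)\oplus\hom(\C e_0, W)\oplus\hom(W,\C e_0)\oplus\C$. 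Each summand is, up to a power of $\alpha$ or $\lambda$, a module of the form $\C[t^{\pm1}]/(t-\beta)^k$ or a tensor/Hom of two such, whose $(t-\beta)$-torsion structure can be read off from the Alexander module hypothesis via the universal coefficient computation already carried out in Lemma~\ref{lem:no-promotion}. The cohomology of each piece is then governed by Lemma~\ref{lem:H1GammaC}: the crucial point is which summands have $\beta = 1$ (contributing $H^0$ and $H^1$ of dimension $1$) versus $\beta = \alpha^{\pm1}\neq 1$ (contributing $H^1,H^2$) versus $\beta$ not a root of $\Delta_K$ (contributing nothing). Here $\hom(W,W)$ and the centre $\C$ have $\beta=1$; $\hom(\C e_0,W)$ is a twist of $W$ by $\lambda^{?}$ giving $\beta=\alpha$, and $\hom(W,\C e_0)$ giving $\beta=\alpha^{-1}$ (Remark~\ref{rem:blanchfield} ensures this torsion is also cyclic of the right size).

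Concretely, I would set up a short exact sequence of $\Gamma$-modules
\[
0 \to \sln_{\Ad\circ\metrep}\cap\mathfrak{b} \to \sln_{\Ad\circ\metrep}\to \hom(W,\C e_0)\to 0
\]
where $\mathfrak{b}$ is the parabolic subalgebra stabilising the flag $\C e_0\subset \C e_0\oplus W$, together with a further filtration of $\sln\cap\mathfrak b$ by $\hom(\C e_0, W)$, $\sln\cap\mathfrak{(gl}(W)\oplus\C)$, using that the latter splits off the centre. Then I would run the long exact cohomology sequences, feeding in the dimension counts $\dim H^*(\Gamma;\C)$ from \eqref{eq:dimHGammaC}, $\dim H^*(\Gamma;\C_{\alpha^{\pm1}})$ from \eqref{eq:dimHGammaCa}, and the computation of $H^*(\Gamma;\hom(W,W))$ — which since $\hom(W,W)$ as a $\Gamma$-module is a successive extension of copies of $\C$ (trivial action, as the $\alpha$-powers cancel in an endomorphism) needs $H^*(\Gamma;\C[t^{\pm1}]/(t-1)^k)$ from Lemma~\ref{lem:H1GammaC}, which is $\C$ in degrees $0,1$ and $0$ above. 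The vanishing $H^0(\Gamma;\sln_{\Ad\circ\metrep_\lambda})=0$ should come out directly: an invariant endomorphism commutes with $\metrep(\Gamma)$, and since $z_1$ is a non-principal derivation the representation is "large enough" that the only invariants in $\gln$ are the scalars, which are killed in $\sln$.

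The main obstacle I expect is not any single long exact sequence but controlling the connecting (Bockstein) maps so that the inequalities coming from the long exact sequences are actually equalities — i.e.\ showing that enough of the boundary maps $H^1\to H^2$ between the various pieces are injective or surjective as needed to pin the dimensions down to exactly $n-1$, rather than merely bounding them. This is where the hypothesis that $\tau_\alpha$ is \emph{cyclic} (not just nontrivial) and that $z_1$ is non-principal must be used decisively: the analogue of Proposition~\ref{prop:no-promotion}, namely that the cup products $\sum z_i\smallcup h_{n-i}$ and $\sum h_i\smallcup \tilde z_{n-i}$ are nonzero in $H^2(\Gamma;\C_\alpha)$, identifies certain Bockstein images as nonzero and thereby forces the rank computations. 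I would therefore interleave the bookkeeping of the long exact sequences with explicit cocycle-level descriptions of the relevant connecting homomorphisms (as in Remark~\ref{rem:bockstein}), matching them against the cup-product classes already shown to be nontrivial, and against Euler-characteristic constraints ($\chi(\Gamma;M)=0$ for any finite-dimensional $M$ since $\Gamma$ is a knot group) to cross-check that $\dim H^1 = \dim H^2$. The detailed verification of these Bockstein ranks — likely organised as a sequence of small lemmas, one per block of the filtration — is the technical heart of the argument and is presumably what occupies Section~\ref{sec:CohomologicalComputations}.
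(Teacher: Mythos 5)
Your plan is correct in outline and identifies the same cohomological machinery the paper uses, but it is organized around a genuinely different filtration of the coefficient module, so the two arguments diverge in their bookkeeping.

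You propose the parabolic (block) filtration: intersect $\sln$ with the Borel-type subalgebra $\mathfrak b$ stabilising $\C e_0$, further filter $\sln\cap\mathfrak b$ by its nilradical (the $\hom$-space between $W$ and $\C e_0$) and its Levi part, and feed the pieces — trivial-action modules for the Levi, and $(t-\alpha^{\pm1})$-twisted cyclic modules for the off-diagonal corners — into Lemma~\ref{lem:H1GammaC}. The paper instead uses a \emph{column} filtration $C(0)\subset C(1)\subset\cdots\subset C(n-1)=\gln$ in which $C(i)$ consists of matrices supported on the last $i+1$ columns. That filtration deliberately cuts across your block decomposition (each $C(i+1)/C(i)\cong C(0)$ contains one entry from $\hom(W,\C e_0)$ and one column of $\mathfrak{gl}(W)$), and is chosen precisely so that Proposition~\ref{prop:H^qC(i)} can be proven by induction, with the single base case $H^*(\Gamma;C(0))=0$ reduced to two connecting maps $\beta^0_0,\beta^1_0$ between \emph{one-dimensional} spaces, where ``nontrivial'' already means ``isomorphism'' (Lemmas~\ref{lem:H^qE_1^n}, \ref{lem:H^qC(0)/<E_1^n>} and Lemma~\ref{lem:cobord}, the latter using Proposition~\ref{prop:no-promotion} exactly as you anticipate). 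Once $H^*(\Gamma;C(n-2))=0$ is established, $H^*(\Gamma;\gln)\cong H^*(\Gamma;\gln/C(n-2))$, and $\gln/C(n-2)$ is the first column $\langle E_1^1\rangle\oplus\hom(\C e_0,W)$, handled in Lemma~\ref{lem:last colum}. In your block approach the analogous step would be to show $H^*(\Gamma;\sln\cap\mathfrak b)=0$, where the two relevant Bockstein maps go between $(n-1)$-dimensional spaces; this is true, but pinning it down requires verifying an $(n-1)\times(n-1)$ ``upper-triangular'' rank argument on cocycle representatives rather than a single scalar non-vanishing. So your route is viable and perhaps conceptually more transparent, but trades a longer induction (the paper) for heavier linear algebra in each connecting map (yours). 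You correctly identify that Proposition~\ref{prop:no-promotion}, the cyclicity of $\tau_\alpha$, Blanchfield symmetry (Remark~\ref{rem:blanchfield}) and the Euler-characteristic constraint are what force the inequalities to be equalities; these play exactly the analogous roles in the paper's computation.

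Two small slips worth flagging. First, the roles of $\hom(W,\C e_0)$ and $\hom(\C e_0,W)$ are interchanged throughout your sketch: the upper-right block $\hom(W,\C e_0)$ (the nilradical of $\mathfrak b$) carries the $\alpha$-twist and is isomorphic to $\C[t^{\pm1}]/(t-\alpha)^{n-1}$, while the lower-left block $\hom(\C e_0,W)\cong\gln/\mathfrak b$ carries the $\alpha^{-1}$-twist; and correspondingly the quotient in your first short exact sequence should read $\hom(\C e_0,W)$, not $\hom(W,\C e_0)$. Second, the scalar twist on $\C e_0$ is $\lambda^{n-1}$, not $\lambda^{-(n-1)}$ (the $(1,1)$ entry of $\metrep_\lambda(\gamma)$ is $\lambda^{-h(\gamma)}\alpha^{h(\gamma)}=\lambda^{(n-1)h(\gamma)}$). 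Neither slip affects the validity of the plan — the first because the two corner modules have isomorphic cohomology by symmetry of $\Delta_K$, the second because as you note the scalar twist is killed by $\Ad$ — but they would need to be cleaned up in a written proof.
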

Notice that Propositions~\ref{prop:dimensions} and \ref{prop:smoothpoint} will proof the first part of Theorem~\ref{thm:mainthm}.
The proof of Proposition~\ref{prop:dimensions} will occupy the rest of this section.

Throughout this section we will consider $\gln$ as a $\Gamma$-module via $\Ad\circ\metrep$ and for simplicity we will
write $\gln$ for $\gln_{\Ad\circ\metrep}$.
It follows form Equation~\eqref{eqn:center} that 
\[ H^*(\Gamma; \gln)\cong H^*(\Gamma; \sln)\oplus H^*(\Gamma; \C)\,.\]

In order to compute the cohomological groups $H^*(\Gamma , \gln)$ and describe the cocycles, 
we will construct and use an adequate filtration of the coefficient algebra $\gln$.

\subsection{The setup}\label{sec:setup}
Let $(E_1,\ldots,E_n)$ denote the canonical basis of the space of  column vectors. 
Hence $E_i^j := E_i \, {}^tE_j$, $1\leq i, j \leq n$, form the canonical basis of $\gln$. 

Note that for $A\in \GLn$,
$\Ad_A( E_i^j) = (AE_i) ({}^tE_j A^{-1})$. The Lie algebra $\gln$ turns into a $\Gamma$-module via 
$\Ad\circ\metrep$ i.e.\ for all $\gamma\in\Gamma$  we have
\[
\gamma\cdot E_i^j = (\metrep(\gamma) E_i ) ({}^tE_j\metrep(\gamma^{-1}) )\,.
\]
Explicitly we have 
\begin{align} 
\gamma \cdot E_1^{1} &=  \left(\begin{array}{c}\alpha^{h(\gamma)} \\0 \\ \vdots \\0\end{array}\right)
\left(\alpha^{-h(\gamma)}, z_1(\gamma^{-1}), \ldots , z_{n-1}(\gamma^{-1})\right)\notag \\
&= E_1^{1} + \alpha^{h(\gamma)}z_1(\gamma^{-1}) E_1^{2} + \cdots 
+  \alpha^{h(\gamma)}z_{n-1}(\gamma^{-1}) E_1^{n};  \label{eq:action1}\\
\intertext{for $1<k\leq n$:}
\gamma\cdot E_1^{k} &= \alpha^{h(\gamma)} E_1^{k} +\alpha^{h(\gamma)}h_1(\gamma^{-1}) E_1^{k+1} + \cdots+
\alpha^{h(\gamma)}h_{n-k}(\gamma^{-1}) E_1^{n} ;  \label{eq:action2}\\
\gamma\cdot E_k^{1} &= \left(\begin{array}{c}z_{k-1}(\gamma) \\h_{k-2}(\gamma) \\ \vdots \\ h_1(\gamma)\\1\\0\\ \vdots
\end{array}\right)
\left(\alpha^{-h(\gamma)} , z_1(\gamma^{-1}), \ldots , z_{n-1}(\gamma^{-1})\right)\label{eq:action3}\\ 
\intertext{and for  $1<i,j \leq n$:}
\gamma \cdot E_i^{j} &= 
\left(\begin{array}{c}z_{i-1}(\gamma) \\h_{i-2}(\gamma) \\ \vdots \\ h_1(\gamma)\\1\\0\\ \vdots
\end{array}\right)
\left(0,\ldots,0,1, h_1(\gamma^{-1}), \ldots , h_{n-j}(\gamma^{-1})\right).\label{eq:action4}
\end{align}

For a given family $(X_i)_{i\in I}$, $X_i\in \gln$, we let $\langle X_i | i\in I \rangle\subset \gln$ 
denote the subspace of $\gln$ generated by the family. 
\begin{remark}\label{remark:cobord}
A first consequence of these calculations is that
if $c\in C^1(\Gamma;\C)$ is a cochain, then for $2\leq i\leq n$ and $1\leq j\leq n$ we have:
\[
\delta^\mathfrak{gl}(cE_i^j)=(\delta c)E_i^j+(h_1\smallcup c) E_{i-1}^j
+\cdots+(h_{i-2}\smallcup c) E_{2}^j+(z_{i-1}\smallcup c) E_{1}^j+ x
\]
where $x\co\Gamma\times\Gamma\to \langle E_k^l \mid 1\leq k\leq i,\ j < l\leq n\rangle$ is 
a $2$-cochain.
Here $\delta^\mathfrak{gl}$ and $\delta$ denote the coboundary operators of 
$C^1(\Gamma;\mathfrak{gl}(n))$ and $C^1(\Gamma;\C)$ respectively.
\end{remark}

In what follows we will also make use of the following $\Gamma$-modules:
for $0\leq i \leq n-1$, we define $C(i) = \langle E_{k}^{l} \mid 1\leq k \leq n,\, n-i\leq l \leq n\rangle$.
We have
\begin{equation}\label{eq:C(i)}
C(i)= 
\left\{
\begin{pmatrix}
0 & \cdots & 0& c_{1,n-i} & \cdots & c_{1,n} \\
0 & \cdots & 0& c_{2,n-i} & \cdots & c_{2,n} \\
\vdots & \vdots & \vdots& \vdots& & \vdots\\
0 & \cdots & 0& c_{n-1,n-i} & \cdots & c_{n-1,n} \\
0 & \cdots & 0 & c_{n,n-i} & \cdots & c_{n,n} 
\end{pmatrix} : c_{i,j}\in\C
\right\}
\end{equation}
and $\gln=C(n-1)\supset C(n-2)\supset\dots\supset C(0)=\langle E^n_1,\ldots, E^n_n\rangle\supset C(-1)=0$.

We will denote by $X+C(i)\in C(k)/C(i)$ the class represented by $X\in C(k)$, $0\leq i<k\leq n-1$.

\subsection{Cohomology with coefficients in $C(i)$}
\label{sec:C(i)}

The aim  of this subsection is to prove that for $0\leq i\leq n-2$ the cohomology groups 
$H^\ast(\Gamma; C(i) )$ vanish (see Proposition~\eqref{prop:H^qC(i)}).
First we will prove this for $i=0$ and in order to conclude we will apply the isomorphism 
$C(0) \cong C(i)/C(i-1)$ (see Lemma~\ref{lem:C(i+1)/C(i)}). Finally Lemma~\ref{lem:cobord} permits us to compute a certain Bockstein operator.

\begin{lemma}\label{lem:H^qE_1^n} The vector space $\left\langle E_1^n \right\rangle$ is a submodule of $C(0)$
and thus of $\gln=C(n-1)$
and we have
\[H^0(\Gamma;\left\langle E_1^n \right\rangle) = 0,\ \dim H^1(\Gamma;\left\langle E_1^n \right\rangle) = \dim H^2(\Gamma;\left\langle E_1^n \right\rangle)=1.\]
More precisely, the cocycles $z_1\,E_1^n\in Z^1(\Gamma;\left\langle E_1^n \right\rangle)$ and
\[
\big(\sum_{i=1}^{n-1} z_i\smallcup h_{n-i}\big)\, E^n_1 \in Z^2(\Gamma;\left\langle E_1^n \right\rangle)
\]
represent  generators of $H^1(\Gamma;\left\langle E_1^n \right\rangle)$ and $H^2(\Gamma;\left\langle E_1^n \right\rangle)$ respectively.
\end{lemma}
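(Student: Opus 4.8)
The key observation is that the one-dimensional subspace $\langle E_1^n\rangle$ is $\Gamma$-invariant with a very simple action, which I would extract from the explicit formulas \eqref{eq:action1}--\eqref{eq:action4}. Indeed, taking $k=n$ in \eqref{eq:action2} gives $\gamma\cdot E_1^n = \alpha^{h(\gamma)}E_1^n$, so as a $\Gamma$-module $\langle E_1^n\rangle \cong \C_\alpha$. Hence the first, cohomological half of the statement is immediate from the dimension formula \eqref{eq:dimHGammaCa} (equivalently Lemma~\ref{lem:H1GammaC}): $H^0 = 0$ and $\dim H^1 = \dim H^2 = 1$, using that $\alpha$ is a root of $\Delta_K$ of multiplicity $n-1 \geq 1$ under the standing hypothesis $\tau_\alpha = \C[t^{\pm1}]/(t-\alpha)^{n-1}$.

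What remains is to identify explicit generators. For $H^1$: under the isomorphism $\langle E_1^n\rangle\cong\C_\alpha$, a cochain $c\,E_1^n$ is a cocycle iff $c\in Z^1(\Gamma;\C_\alpha)$, and $z_1$ is by hypothesis a non-principal derivation $\Gamma\to\C_\alpha$; since $\dim H^1(\Gamma;\C_\alpha)=1$ its class generates, so $z_1\,E_1^n$ generates $H^1(\Gamma;\langle E_1^n\rangle)$. For $H^2$: I first check that $\big(\sum_{i=1}^{n-1}z_i\smallcup h_{n-i}\big)E_1^n$ is a $2$-cocycle in $Z^2(\Gamma;\langle E_1^n\rangle)\cong Z^2(\Gamma;\C_\alpha)$. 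This follows by the same computation as in the proof of Lemma~\ref{lem:tildez} together with the coboundary identity \eqref{eq:h_k} for the $h_k$; concretely, the relations $\delta z_k + \sum_{i=1}^{k-1}z_i\smallcup h_{k-i}=0$ for $1\leq k\leq n-1$ (established in Section~\ref{sec:reducible} for the cocycle $\mathbf z$ of the representation \eqref{eq:rep-gln}) combined with $\delta(u\smallcup v) = (-1)^q\,\delta u\smallcup v + u\smallcup \delta v$ force $\delta\big(\sum_{i=1}^{n-1}z_i\smallcup h_{n-i}\big)=0$ — this is exactly the content of the cocycle condition in Remark~\ref{remark:cobord} at the corner entry, and it is precisely the statement that $\metrep$ fails to upgrade at level $n$ only through a possibly-trivial class.

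The one real point to argue is that this class is \emph{nonzero} in $H^2(\Gamma;\C_\alpha)$, for only then is it a generator of the $1$-dimensional space. But this is exactly Proposition~\ref{prop:no-promotion}: since $z_1$ is a non-principal derivation, the cocycle $\sum_{i=1}^{n-1}z_i\smallcup h_{n-i}$ represents a nontrivial cohomology class in $H^2(\Gamma;\C_\alpha)$. Combined with $\dim H^2(\Gamma;\C_\alpha)=1$, its class is therefore a generator, and transporting back along $\langle E_1^n\rangle\cong\C_\alpha$ finishes the proof. The main obstacle is thus not present in this lemma itself: the genuinely substantive input, nonvanishing of the cup-product class, has already been isolated in Proposition~\ref{prop:no-promotion} (ultimately resting on \cite[Theorem~3.2]{BenAbdelghani-Lines2002}), so here the work is only to verify the cocycle identities and invoke the dimension count.
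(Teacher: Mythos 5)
Your proposal is correct and follows the paper's own argument: establish the $\Gamma$-module isomorphism $\langle E_1^n\rangle\cong\C_\alpha$, read off the dimensions from Lemma~\ref{lem:H1GammaC}, identify $z_1 E_1^n$ as a generator of $H^1$ because $z_1$ is a non-principal derivation, and invoke Proposition~\ref{prop:no-promotion} for the non-triviality of the $H^2$ class. You fill in the cocycle verification for $\sum_{i=1}^{n-1}z_i\smallcup h_{n-i}$ more explicitly than the paper (which takes it as read from Section~\ref{sec:reducible}), but the ingredients and the structure of the argument are the same.
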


\begin{proof}
The isomorphism $\left\langle E_1^n \right\rangle\cong\C_\alpha$ and
Lemma~\ref{lem:H1GammaC}  imply the dimension formulas.
The form of the generating cocycles follows from the isomorphism
$\left\langle E_1^n \right\rangle\cong \C_\alpha$ and
Proposition~\ref{prop:no-promotion}.
\end{proof}

\begin{lemma}\label{lem:H^qC(0)/<E_1^n>}
The $\Gamma$-module $C(0)/\left\langle E_1^n \right\rangle$ is isomorphic to  $\C[t^{\pm 1}]/(t-1)^{n-1}$.
In particular, we obtain:
\begin{enumerate}
\item for $q=0,1$ $\dim H^q\big(\Gamma; C(0)/\left\langle E_1^n \right\rangle\big)=1$ and
$H^2\big(\Gamma; C(0)/\left\langle E_1^n \right\rangle\big)=0$,
\item the vector $E_2^n$ represents a generator of $H^0\left(\Gamma;C(0)/\left\langle E_1^n \right\rangle\right)$ and
the cochain $\bar v_1\co \Gamma\to C(0)$ given by
\[ 
\bar v_1 (\gamma) = h_1(\gamma) E_n^{n} + h_2(\gamma) E_{n-1}^{n}+\cdots+
h_{n-2}(\gamma) E_{2}^{n}
\]
represents a generator of $H^1(\Gamma;C(0)/\left\langle E_1^n \right\rangle)$.
\end{enumerate}

\end{lemma}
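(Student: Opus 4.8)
The plan is to first identify the quotient module $C(0)/\langle E_1^n\rangle$ explicitly and recognize it as a familiar $\C[t^{\pm1}]$-module, then invoke Lemma~\ref{lem:H1GammaC} to read off the cohomology, and finally produce explicit (co)cycles representing the generators in degrees $0$ and $1$.

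\emph{Step 1: identify the module.} The submodule $C(0)=\langle E_1^n,\ldots,E_n^n\rangle$ is the last column of $\gln$, and $\langle E_1^n\rangle$ is the top entry, which we have already seen is isomorphic to $\C_\alpha$. So $C(0)/\langle E_1^n\rangle$ is spanned by the classes of $E_2^n,\ldots,E_n^n$. To compute the $\Gamma$-action I would specialize the action formulas \eqref{eq:action3} and \eqref{eq:action4} to the case $j=n$: since $h_{n-j}=h_0\equiv 1$ is the only surviving binomial term on the right factor, the action of $\gamma$ on $E_k^n$ mod $\langle E_1^n\rangle$ becomes $\gamma\cdot E_k^n \equiv E_k^n + h_1(\gamma^{-1})E_{k-1}^n + \cdots + h_{k-2}(\gamma^{-1})E_2^n$ for $2\le k\le n$. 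Writing a vector $\mathbf b=(b_2,\ldots,b_n)$ for $\sum b_k E_k^n$, this is exactly the action $\mathbf b\mapsto \mathbf b\, J_{n-1}^{h(\gamma^{-1})} = \mathbf b\, J_{n-1}^{-h(\gamma)}$ after a reindexing of coordinates (and, if one prefers $J_{n-1}^{+h(\gamma)}$, after conjugating by $P_{n-1}$ as in the passage from \eqref{eq:rep-alt-gln} to \eqref{eq:rep-gln}). By Lemma~\ref{lem:iso} (applied with $\alpha=1$, since there is no scalar $\alpha^{h(\gamma)}$ factor in the last column), this $\C[t^{\pm1}]$-module is isomorphic to $\C[t^{\pm1}]/(t-1)^{n-1}$.

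\emph{Step 2: read off the cohomology.} Since $\Delta_K(1)=\pm1\neq 0$, the $(t-1)$-torsion of the Alexander module is trivial ($\tau_1=0$, $n_1=0$ in the notation of Lemma~\ref{lem:H1GammaC}), so the first case of Lemma~\ref{lem:H1GammaC} gives $\dim H^q(\Gamma;\C[t^{\pm1}]/(t-1)^{n-1})=1$ for $q=0,1$ and $=0$ for $q\ge 2$. This is assertion (1).

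\emph{Step 3: the explicit generators.} For $H^0$: $E_2^n$ is the image of the bottom generator under the identification with $\C\cong\C[t^{\pm1}]/(t-1)$-fixed part; concretely, in the description above, the coordinate $b_2$ is acted on trivially (it sits in the kernel of $N_{n-1}$ on the correct side), so $E_2^n$ is $\Gamma$-invariant modulo $\langle E_1^n\rangle$ and generates the one-dimensional $H^0$. For $H^1$: I would verify directly that the proposed cochain $\bar v_1(\gamma)=h_1(\gamma)E_n^n+h_2(\gamma)E_{n-1}^n+\cdots+h_{n-2}(\gamma)E_2^n$ is a cocycle modulo $\langle E_1^n\rangle$ using the coboundary formula of Remark~\ref{remark:cobord} together with the binomial identities \eqref{eq:h_k}: applying $\delta^{\mathfrak{gl}}$ to $h_{m}E_{n+1-m}^n$ and summing, the terms $h_i\smallcup h_{m-i}$ that appear cancel against $\delta h_m$ by \eqref{eq:h_k}, up to a contribution landing in $\langle E_1^n\rangle$ (which is killed in the quotient). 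To see that $\bar v_1$ is not a coboundary, I would note that its "leading" component $h_1=h$ in the $E_n^n$ slot is the pullback of the standard generator of $H^1(\Gamma;\C)\cong\C$ (recall $h\co\Gamma\to\Z\subset\C$ is not principal), so $\bar v_1$ is nonzero in the one-dimensional $H^1$.

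\emph{Main obstacle.} The genuine content is Step 1 — correctly bookkeeping the indices and the direction of the $J_{n-1}$-action when passing from the matrix-entry basis $E_k^n$ to a coordinate vector, and checking that the scalar $\alpha$ genuinely drops out in the last column so that one lands on $(t-1)^{n-1}$ rather than $(t-\alpha)^{n-1}$. Step 3's cocycle verification is a routine but slightly fiddly computation with the identities \eqref{eq:h_k} and Remark~\ref{remark:cobord}; everything else is immediate from Lemma~\ref{lem:H1GammaC} and Lemma~\ref{lem:iso}.
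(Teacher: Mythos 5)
Your plan follows the paper's up to Step~2: identify $C(0)/\langle E_1^n\rangle\cong\C[t^{\pm1}]/(t-1)^{n-1}$ (the paper does this by a bare-hands basis comparison against $(t-1)^0,\ldots,(t-1)^{n-2}$ rather than citing Lemma~\ref{lem:iso}, but it is the same content), and then read off the dimensions from Lemma~\ref{lem:H1GammaC}. A sign error crept into your Step~1, however: at $j=n$ the row factor of \eqref{eq:action4} reduces to $(0,\ldots,0,1)$, so all surviving terms come from the \emph{column} factor $\metrep(\gamma)E_k$, which carries $h_i(\gamma)$, not $h_i(\gamma^{-1})$. Thus $\gamma\cdot E_k^n\equiv E_k^n+h_1(\gamma)E_{k-1}^n+\cdots+h_{k-2}(\gamma)E_2^n$ modulo $\langle E_1^n\rangle$, which in coordinates (ordered so $E_n^n$ is first) is $\mathbf{b}\mapsto\mathbf{b}\,J_{n-1}^{+h(\gamma)}$; no $P_{n-1}$-conjugation is needed. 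Your point that $\alpha$ genuinely drops out is correct. Also note that the lemma statement contains a typo you inherited: the last term of $\bar v_1$ should read $h_{n-1}(\gamma)E_2^n$, not $h_{n-2}(\gamma)E_2^n$, as the paper itself writes when it reuses $\bar v_1$ in the proof of Proposition~\ref{prop:H^qC(i)} (otherwise the term count does not match the span $E_n^n,\ldots,E_2^n$).

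For the $H^1$ generator the paper's route is genuinely different from yours. It introduces the short exact sequence $0\to\C[t^{\pm1}]/(t-1)^{n-2}\xrightarrow{(t-1)\cdot}\C[t^{\pm1}]/(t-1)^{n-1}\to\C\to0$, computes $\delta(e_0)(\gamma)=(t^{h(\gamma)}-1)\cdot1=\sum_{i\ge1}h_i(\gamma)(t-1)^i$, and exhibits $[\bar v_1]$ as the Bockstein image $\beta^0[1]$ after transport through $\varphi$. Your proposal---expand $\delta^{\mathfrak{gl}}(\bar v_1)$ via Remark~\ref{remark:cobord} at $j=n$ (where the error cochain $x$ vanishes), use \eqref{eq:h_k} to kill each $E_k^n$-coefficient with $k\ge2$, and observe that the remaining term $\bigl(\sum_{i}z_i\smallcup h_{n-i}\bigr)E_1^n$ is killed in the quotient---is a legitimate alternative, and has the bonus of producing exactly the $k=0$ case of Lemma~\ref{lem:cobord}, which the paper needs anyway for the Bockstein $\beta_0^1$ in Proposition~\ref{prop:H^qC(i)}. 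Your nonvanishing argument is also sound: the reduction $\C[t^{\pm1}]/(t-1)^{n-1}\to\C$ corresponds under $\varphi$ to the $E_n^n$-coordinate, it is onto on $H^1$ by the same exact sequence, and $\bar v_1\mapsto h_1=h$, which generates $H^1(\Gamma;\C)$.
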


\begin{proof} 
First notice that $C(0)/\left\langle E_1^n \right\rangle$ is a $(n-1)$-dimensional vector space. More precisely,
a basis of this space is represented by the elements 
\[ E^{n}_{n},E^{n}_{n-1},\ldots,E^{n}_{2}\,.\] 

It follows from \eqref{eq:action4} that the action of $\Gamma$ on $C(0)/\left\langle E_1^n \right\rangle$ factors through
$h\co\Gamma\to\Z$. More precisely, we have for all $\gamma\in\Gamma$ such that
$h(\gamma)=1$ and for all $0\leq l \leq n-1$
\[ 
\gamma\cdot E^{n}_{n-l} = E^{n}_{n-l} + E^{n}_{n-l-1} 
\]
Here we used the fact that if $h(\gamma)=1$ then $h_i(\gamma)=0$ for all $2\leq i \leq n-1$.

On the other hand
\[ \left(1=(t-1)^0,(t-1),\ldots,(t-1)^{n-2}\right) \]
represents a basis of $\C[t^{\pm 1}]/(t-1)^{n-1}$ and we have for all $\gamma\in\Gamma$ such that
$h(\gamma)=1$:
\[
\gamma\cdot (t-1)^{l} = (t-1)^{l} + (t-1)^{l+1} + p 
\]
where $p\in (t-1)^{n-1}\C[t^{\pm 1}] $ and $0\leq l \leq n-2$. Hence the bijection
\[\varphi\co \{ (t-1)^{l} \mid 0\leq l\leq n-2\}\to\{ E^{n}_{n-l} \mid 0\leq l\leq n-2\}\]
given by $\varphi \co (t-1)^{l}\mapsto E^{n}_{n-l}$,
$0\leq l\leq n-2$, induces an isomorphism of $\Gamma$-modules
\[ 
\varphi \co \C[t^{\pm 1}]/(t-1)^{n-1}\xrightarrow{\ \cong\ } C(0)/\left\langle E_1^n \right\rangle\,.
\]
Now, the first assertion follows from Lemma~\ref{lem:H1GammaC}.
 
Moreover, it follows from the above considerations that $E_2^n$ represents a generator of 
$H^0(\Gamma; C(0)/\left\langle E_1^n \right\rangle)$.
To prove the second assertion consider
 the following short exact sequence
\[ 
0\to \C[t^{\pm 1}]/(t-1)^{n-2}\xrightarrow{(t-1)\cdot} \C[t^{\pm 1}]/(t-1)^{n-1} \to \C\to0
\] 
which gives the following long exact sequence in cohomology:
\begin{multline*}\label{suitecoho:cmoinsn}
0\to H^0(\Gamma; \C[t^{\pm 1}]/(t-1)^{n-2})\xrightarrow{\cong} H^0(\Gamma;\C[t^{\pm 1}]/(t-1)^{n-1})
\to\\ H^0(\Gamma;\C)\xrightarrow{\beta^0}H^1(\Gamma;\C[t^{\pm 1}]/(t-1)^{n-2})
\to\\ H^1(\Gamma;\C[t^{\pm 1}]/(t-1)^{n-1})\xrightarrow{\cong} H^1(\Gamma;\C){\to} H^2(\Gamma; \C[t^{\pm 1}]/(t-1)^{n-2})=0\,.
\end{multline*}
The isomorphisms and the vanishing of $H^2(\Gamma; \C[t^{\pm 1}]/(t-1)^{n-2})$ follow directly from 
Lemma~\ref{lem:H1GammaC}. 

Hence the Bockstein operator $\beta^0$ is an isomorphism: the element $e_{0}=1\in\C[t^{\pm 1}]/(t-1)^{n-1}$ projects onto a generator
of $H^0(\Gamma;\C)$ and if $\delta^{n-1}$ denotes the coboundary operator of 
$C^*(\Gamma; \C[t^{\pm 1}]/(t-1)^{n-1})$ we obtain:
\begin{align*} 
\delta^{n-1}(e_{0})(\gamma) &= (\gamma-1)\cdot e_{0}\\
&= h_1(\gamma) e_1 + h_2(\gamma) e_2 + \cdots + h_{n-2}(\gamma) e_{n-1}\\
&= (t-1)\cdot \big(  h_1(\gamma) e_0 + h_2(\gamma) e_1 + \cdots + h_{n-2}(\gamma) e_{n-2}\big)\,.
\end{align*}
Hence the cocycle 
$\gamma\mapsto h_1(\gamma) e_0 + h_2(\gamma) e_1 + \cdots + h_{n-2}(\gamma) e_{n-2}$
represents a generator of $H^1(\Gamma;\C[t^{\pm 1}]/(t-1)^{n-2})$.
To conclude, recall that the isomorphism $\C[t^{\pm 1}]/(t-1)^{n-1}\cong C(0)/\left\langle E_1^n \right\rangle$
is induced by the map $\varphi\co e_l\mapsto E^{n}_{n-l}$, $0\leq l\leq n-2$.
\end{proof}

\begin{lemma}\label{lem:C(i+1)/C(i)} 
For $i\in\Z$, $0\leq i\leq n-3$, the $\Gamma$-module $C(i+1)/C(i)$ is isomorphic to 
$C(0)$.
\end{lemma}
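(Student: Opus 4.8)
The plan is to write down an explicit linear isomorphism between $C(0)$ and $C(i+1)/C(i)$ and to verify, directly from the action formulas \eqref{eq:action1}--\eqref{eq:action4}, that it intertwines the two $\Gamma$-actions. Since $0\leq i\leq n-3$ we have $2\leq n-i-1\leq n-1$, so the $(n-i-1)$-st column is neither the first column nor contained in $C(i)$; consequently $C(i+1)=C(i)\oplus\langle E_k^{n-i-1}\mid 1\leq k\leq n\rangle$ as vector spaces, and the classes $E_k^{n-i-1}+C(i)$, $1\leq k\leq n$, form a basis of $C(i+1)/C(i)$. Since $E_1^n,\ldots,E_n^n$ is a basis of $C(0)$, the linear map $\varphi_i\colon C(0)\to C(i+1)/C(i)$ defined by $\varphi_i(E_k^n)=E_k^{n-i-1}+C(i)$ is a vector space isomorphism.

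It remains to check $\Gamma$-equivariance. First I would record the general shape of the action: by \eqref{eq:action2} and \eqref{eq:action4}, for $1\leq k\leq n$ and $1<j\leq n$ the element $\gamma\cdot E_k^j$ is a linear combination of the $E_{k'}^{l'}$ with $1\leq k'\leq k$ and $j\leq l'\leq n$, and the coefficient of $E_{k'}^{j}$ equals the $k'$-th coordinate of the column vector $\metrep(\gamma)E_k$, because the row vector ${}^tE_j\metrep(\gamma^{-1})$ has leading entry $1$ in position $j$. Reducing modulo $C(i)$ kills every $E_{k'}^{l'}$ with $l'>n-i-1$, so for $j=n-i-1$ one obtains
\[
\gamma\cdot\big(E_k^{n-i-1}+C(i)\big)=\sum_{k'=1}^{k}\big(\metrep(\gamma)E_k\big)_{k'}\,\big(E_{k'}^{n-i-1}+C(i)\big).
\]
For the last column $j=n$ the row vector ${}^tE_n\metrep(\gamma^{-1})$ is simply ${}^tE_n$, so already in $C(0)$ we have $\gamma\cdot E_k^{n}=\sum_{k'=1}^{k}\big(\metrep(\gamma)E_k\big)_{k'}E_{k'}^{n}$, with exactly the same coefficients. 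Hence $\varphi_i(\gamma\cdot E_k^n)=\gamma\cdot\varphi_i(E_k^n)$ for every basis element, and $\varphi_i$ is an isomorphism of $\Gamma$-modules.

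The computation is entirely mechanical; the one point requiring care — and the reason for the hypothesis $i\leq n-3$ — is that the $(n-i-1)$-st column must avoid the first column. Indeed, for $i=n-2$ the quotient $C(n-1)/C(n-2)$ is the first column, on which the action is governed by \eqref{eq:action1} and \eqref{eq:action3} and involves the extra scalar $\alpha^{-h(\gamma)}$ and the cochains $z_j(\gamma^{-1})$, so it is not isomorphic to $C(0)$; the argument above applies verbatim precisely when $n-i-1>1$.
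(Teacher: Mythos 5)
Your proof is correct and takes essentially the same approach as the paper: the paper constructs the bijection $\phi\co E_{n-j}^{n-(i+1)}+C(i)\mapsto E_{n-j}^{n}$ and asserts that \eqref{eq:action4} shows it intertwines the $\Gamma$-actions, while you write down the inverse map and spell out in detail why the coefficient structure coming from \eqref{eq:action2}--\eqref{eq:action4} makes it equivariant after passing to the quotient. Your closing remark about $i=n-2$ also flags a small inaccuracy in the paper's proof, which says ``for all $i\in\Z$, $0\leq i\leq n-2$'' even though the lemma is only stated (and only holds) for $0\leq i\leq n-3$; for $i=n-2$ the relevant column is the first one, governed by \eqref{eq:action1} and \eqref{eq:action3}, and the resulting quotient module is handled separately in Lemma~\ref{lem:last colum}.
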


\begin{proof}
It follows from \eqref{eq:action4} that, for all $i\in\Z$, $0\leq i\leq n-2$, the bijection
 \[\phi\co\{E_{n-j}^{n-(i+1)}+C(i)\ |\ 0\leq j\leq n-1\}\to
\{E_{n-j}^{n}\ |\ 0\leq j\leq n-1\}\] 
given by $\phi(E_{n-j}^{n-(i+1)}+C(i))=E_{n-j}^{n}$ induces an isomrphism of
$\Gamma$-modules $\phi:C(i+1)/C(i)\to C(0)$.
\end{proof}

Let us recall the definition of the cochains $h_i\in C^1(\Gamma;\C)$, given by 
$h_i(\gamma) = {h(\gamma)\choose i}$ (see Equation~\eqref{eq:def-h_k}).
Recall also that for $1\leq i\leq n-1$ the 
cochains $h_i\in C^1(\Gamma;\C)$ verify Equation~\eqref{eq:h_k}:
\[ \delta h_i + \sum_{j=1}^{i-1} h_j\smallcup h_{i-j} =0.\]
\begin{lemma}\label{lem:cobord}
Let $\delta^\mathfrak{gl}$ denote the coboundary operator of $C^*(\Gamma;\gln)$.
Then for all  $0\leq k \leq n-2$ there exists a cochain 
$x_{k-1}\in C^2(\Gamma; C(k-1) )$ such that
\[
\delta^\mathfrak{gl}\big(\sum_{i=2}^{n} h_{n-i+1} E^{n-k}_i\big) =
\big(\sum_{i=1}^{n-1} z_{i}\smallcup h_{n-i}\big) E^{n-k}_1 +x_{k-1}
\]
\end{lemma}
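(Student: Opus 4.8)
The plan is to differentiate the cochain $\sum_{i=2}^n h_{n-i+1}\,E_i^{n-k}$ term by term using Remark~\ref{remark:cobord} and then to collect the output according to which basis vector $E_p^{n-k}$ it carries. Since $2\leq i\leq n$ for every summand, Remark~\ref{remark:cobord} applies with $c=h_{n-i+1}$ and second index $n-k$, giving
\[
\delta^\mathfrak{gl}(h_{n-i+1}E_i^{n-k}) = (\delta h_{n-i+1})E_i^{n-k} + \sum_{m=1}^{i-2}(h_m\smallcup h_{n-i+1})E_{i-m}^{n-k} + (z_{i-1}\smallcup h_{n-i+1})E_1^{n-k} + x^{(i)},
\]
where each $x^{(i)}$ takes values in $\langle E_a^l\mid 1\leq a\leq i,\ n-k<l\leq n\rangle$. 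Because $n-k<l\leq n$ is the same as $n-(k-1)\leq l\leq n$, each $x^{(i)}$ is in fact valued in the submodule $C(k-1)$, so $x_{k-1}:=\sum_{i=2}^n x^{(i)}$ is a $2$-cochain with values in $C(k-1)$; for $k=0$ this is the zero cochain, consistent with $C(-1)=0$.

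Summing the displayed identities over $i=2,\dots,n$ and collecting coefficients is then routine. The coefficient of $E_1^{n-k}$ is $\sum_{i=2}^n z_{i-1}\smallcup h_{n-i+1}$, which is exactly $\sum_{j=1}^{n-1}z_j\smallcup h_{n-j}$ after setting $j=i-1$. For a fixed $p$ with $2\leq p\leq n$, the coefficient of $E_p^{n-k}$ receives $\delta h_{n-p+1}$ from the term $i=p$ and, for each $i$ with $p+1\leq i\leq n$, the summand $h_{i-p}\smallcup h_{n-i+1}$ from $m=i-p$ in the middle sum. Writing $s=n-p+1\in\{1,\dots,n-1\}$ and re-indexing $\ell=i-p$, this coefficient equals
\[
\delta h_s + \sum_{\ell=1}^{s-1}h_\ell\smallcup h_{s-\ell},
\]
which vanishes by Equation~\eqref{eq:h_k}. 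Hence only the $E_1^{n-k}$-term and the $C(k-1)$-valued part survive, which is precisely the asserted identity.

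I do not expect any genuine obstacle; the work is bookkeeping. The two points worth care are (i) that the lower-order error $x$ produced by Remark~\ref{remark:cobord} actually lands in $C(k-1)$ rather than merely in $\gln$ — this is exactly what the constraint $n-k<l\leq n$ gives — and (ii) that each collected coefficient of $E_p^{n-k}$ with $p\geq2$ is genuinely one of the relations \eqref{eq:h_k}, the index $s=n-p+1$ ranging over $1,\dots,n-1$, which is the precise range in which \eqref{eq:h_k} is available.
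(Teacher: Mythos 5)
Your proposal is correct and takes essentially the same approach as the paper: apply Remark~\ref{remark:cobord} to each summand, observe the error terms land in $C(k-1)$, collect the $E_1^{n-k}$ contributions into $\sum z_i\smallcup h_{n-i}$, and use Equation~\eqref{eq:h_k} to kill the coefficient of each $E_p^{n-k}$ with $p\geq 2$. The paper's only cosmetic difference is that it first swaps the order of summation in the double sum $\sum_{i}\sum_{l} h_{i-l}\smallcup h_{n-i+1}\,E_l^{n-k}$ and only then invokes \eqref{eq:h_k}, whereas you fix the target index $p$ and re-index by $\ell=i-p$ directly; the underlying bookkeeping is identical.
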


\begin{proof} Equation~\eqref{eq:action4} 
and Remark~\ref{remark:cobord} imply that
\begin{multline*}
 \delta^\mathfrak{gl}(h_{n-i+1} E^{n-k}_i)= \\ z_{i-1}\smallcup h_{n-i+1} \, E_1^{n-k} +
\sum_{l=2}^{i-1} h_{i-l}\smallcup h_{n-i+1} \, E^{n-k}_l + \delta h_{n-i+1}\,E^{n-k}_i + x_{i,k-1} \,
\end{multline*}
where $x_{i,k-1}\in C^2(\Gamma; C(k-1))$ and $\delta$ is the boundary operator of
$C^*(\Gamma;\C)$.
Therefore,
\begin{align*}
\delta^\mathfrak{gl}(\sum_{i=2}^{n} h_{n-i+1} E^{n-k}_i) &=
\big(\sum_{i=2}^{n} z_{i-1}\smallcup h_{n-i+1}\big) E^{n-k}_1 +
 \sum_{i=2}^{n}\sum_{l=2}^{i-1} h_{i-l}\smallcup h_{n-i+1} \, E^{n-k}_l\\
&\qquad\qquad + \sum_{i=2}^{n} \delta h_{n-i+1}\,E^{n-k}_i + x_{k-1} \,.
\end{align*}
where $x_{k-1}=\sum_{i=2}^n x_{i,k-1}\in C^2(\Gamma; C(k-1))$.
A direct calculation gives that
\begin{align*}
\sum_{i=2}^{n}\sum_{l=2}^{i-1} h_{i-l}\smallcup h_{n-i+1} \, E^{n-k}_l &=
\sum_{l=2}^{n-1}\sum_{i=l+1}^{n} h_{i-l}\smallcup h_{n-i+1} \, E^{n-k}_l \\
&= \sum_{l=2}^{n-1}\Big(\sum_{i=1}^{n-l} h_{i}\smallcup h_{n-l+1-i} \Big)\, E^{n-k}_l\,.
\end{align*}
Thus
\begin{multline*}
\delta^\mathfrak{gl}(h_{n-i+1} E^{n-k}_i) =
\big(\sum_{i=1}^{n-1} z_{i}\smallcup h_{n-i}\big) E^{n-k}_1 \\ + \delta h_1\,E^{n-k}_n + 
\sum_{i=1}^{n-2} \big( \delta h_{n-i} + \sum_{l=1}^{n-i-1} h_{l}\smallcup h_{n-i-l} \big)\,E^{n-k}_i 
+ x_{k-1}\,.
\end{multline*}
Now $\delta h_1 =0$ and by \eqref{eq:h_k} we have 
$\delta h_{n-i} + \sum_{l=1}^{n-i} h_{l}\smallcup h_{n-i+1-l}=0$. Hence we obtain the claimed formula.
\end{proof}

\begin{prop}\label{prop:H^qC(i)}
For all $i\in\Z$, $0\leq i\leq n-2$ and $q\geq0$ we have
\[H^q(\Gamma;C(i))=0.\]
\end{prop}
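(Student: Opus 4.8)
The plan is to prove the vanishing $H^q(\Gamma;C(i))=0$ for $0\le i\le n-2$ by induction on $i$, using the filtration $C(0)\subset C(1)\subset\dots\subset C(n-2)$ together with the long exact cohomology sequences associated to the short exact sequences $0\to C(i-1)\to C(i)\to C(i)/C(i-1)\to 0$. The key structural inputs are already at hand: Lemma~\ref{lem:C(i+1)/C(i)} identifies $C(i)/C(i-1)\cong C(0)$ as $\Gamma$-modules for $1\le i\le n-2$, so once the base case $H^q(\Gamma;C(0))=0$ is established, the inductive step is immediate from the long exact sequence $\cdots\to H^q(\Gamma;C(i-1))\to H^q(\Gamma;C(i))\to H^q(\Gamma;C(i)/C(i-1))\to\cdots$, whose outer terms vanish by the inductive hypothesis and by the base case respectively.

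So the heart of the matter is the base case $H^q(\Gamma;C(0))=0$ for all $q\ge 0$. Here I would use the short exact sequence of $\Gamma$-modules
\[
0\to \langle E_1^n\rangle \to C(0)\to C(0)/\langle E_1^n\rangle\to 0
\]
and its long exact cohomology sequence. By Lemma~\ref{lem:H^qE_1^n} the submodule $\langle E_1^n\rangle\cong\C_\alpha$ has $H^0=0$ and $\dim H^1=\dim H^2=1$, while by Lemma~\ref{lem:H^qC(0)/<E_1^n>} the quotient $C(0)/\langle E_1^n\rangle\cong\C[t^{\pm1}]/(t-1)^{n-1}$ has $\dim H^0=\dim H^1=1$ and $H^2=0$ (and $H^q=0$ for $q\ge 3$ by Lemma~\ref{lem:H1GammaC}). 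Thus the long exact sequence reads
\[
0\to 0\to H^0(\Gamma;C(0))\to \C \xrightarrow{\ \beta^0\ } \C \to H^1(\Gamma;C(0))\to \C \xrightarrow{\ \beta^1\ } \C \to H^2(\Gamma;C(0))\to 0
\]
with all higher groups zero. Therefore it suffices to show that both connecting (Bockstein) homomorphisms $\beta^0$ and $\beta^1$ are isomorphisms; then exactness forces $H^q(\Gamma;C(0))=0$ for every $q$.

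To see $\beta^0$ is an isomorphism: $H^0(\Gamma;C(0)/\langle E_1^n\rangle)$ is generated by the class of $E_2^n$ (Lemma~\ref{lem:H^qC(0)/<E_1^n>}), and computing $\delta^{\mathfrak{gl}}(E_2^n)$ via \eqref{eq:action4} / Remark~\ref{remark:cobord} yields $z_1\smallcup 1\cdot E_1^n$ up to the relevant identifications — that is, $\beta^0$ sends the generator of $H^0$ of the quotient to $z_1\, E_1^n$, which by Lemma~\ref{lem:H^qE_1^n} generates $H^1(\Gamma;\langle E_1^n\rangle)$. To see $\beta^1$ is an isomorphism: by Lemma~\ref{lem:H^qC(0)/<E_1^n>} a generator of $H^1(\Gamma;C(0)/\langle E_1^n\rangle)$ is represented by $\bar v_1=\sum_{i=2}^{n}h_{n-i+1}E_i^n$ (with the convention on indices there), and Lemma~\ref{lem:cobord} in the case $k=0$ computes $\delta^{\mathfrak{gl}}(\bar v_1)=\big(\sum_{i=1}^{n-1}z_i\smallcup h_{n-i}\big)E_1^n$ (the error term $x_{k-1}\in C^2(\Gamma;C(k-1))$ vanishes since $C(-1)=0$). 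By Lemma~\ref{lem:H^qE_1^n} this $2$-cocycle generates $H^2(\Gamma;\langle E_1^n\rangle)$, so $\beta^1$ is onto, hence an isomorphism between two one-dimensional spaces. I expect the main obstacle to be the bookkeeping in the $\beta^0$ and $\beta^1$ computations — carefully tracking the $\Gamma$-module isomorphisms of Lemmas~\ref{lem:H^qC(0)/<E_1^n>} and \ref{lem:C(i+1)/C(i)} so that the Bockstein of the named generator of the quotient's cohomology lands precisely on the named generator of the submodule's cohomology rather than on a nonzero multiple plus coboundary — but Lemma~\ref{lem:cobord} has been arranged exactly to supply this, so the argument should go through cleanly.
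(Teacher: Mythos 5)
Your proposal follows exactly the same line as the paper's proof: the short exact sequence $0\to \langle E_1^n\rangle\to C(0)\to C(0)/\langle E_1^n\rangle\to 0$ for the base case, Lemmas~\ref{lem:H^qE_1^n}, \ref{lem:H^qC(0)/<E_1^n>} and \ref{lem:cobord} to show the two Bocksteins are isomorphisms (the computation $\beta^0(E_2^n)=z_1E_1^n$ and $\beta^1(\bar v_1)=\big(\sum_i z_i\smallcup h_{n-i}\big)E_1^n$ with the error term $x_{-1}$ dropping out since $C(-1)=0$), and induction via $C(i+1)/C(i)\cong C(0)$ for the general case. The argument is correct and matches the paper's proof.
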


\begin{proof}
For $q\geq 3$ we have $H^q(\Gamma;C(i))=0$ since the knot exterior $X$ has the homotopy type of a $2$-dimensional complex.
We start by proving the result for $i=0$. Consider the short exact sequence
\begin{equation}\label{seq:C(0)}
0\to \left\langle E_1^n \right\rangle\rightarrowtail C(0)\twoheadrightarrow C(0)/\left\langle E_1^n \right\rangle\to 0.
\end{equation}
As the $\C[t^{\pm 1}]$-modules $\langle E_1^n \rangle$ and 
$\C_\alpha \cong \C[t^{\pm 1}]/(t-\alpha)$ are isomorphic,
the sequence~\eqref{seq:C(0)} gives us a long exact sequence in cohomology:
\begin{multline*}
0= H^0(\Gamma;\left\langle E_1^n \right\rangle)\to H^0(\Gamma;C(0))\to 
H^0\left(\Gamma;C(0)/\left\langle E_1^n \right\rangle\right)\xrightarrow{\beta_0^0} \\
H^1(\Gamma;\left\langle E_1^n \right\rangle)
\to H^1(\Gamma;C(0))
\to H^1\left(\Gamma;C(0)/\left\langle E_1^n \right\rangle\right)\xrightarrow{\beta_0^1} \\
H^2(\Gamma;\left\langle E_1^n \right\rangle)\to 
H^2(\Gamma;C(0))\to H^2\left(\Gamma;C(0)/\left\langle E_1^n \right\rangle\right)\to 0\,.
\end{multline*}
Here, for $q=0,1$, we denoted by $\beta_0^q: H^q\left(\Gamma;C(0)/\left\langle E_1^n \right\rangle\right)
\to H^{q+1}(\Gamma;\left\langle E_1^n \right\rangle)$
the Bockstein homomorphism.
By Lemma~\ref{lem:H^qC(0)/<E_1^n>}, $E_2^n$ represents a generator of $H^0\left(\Gamma;C(0)/\left\langle E_1^n \right\rangle\right)$, so
\begin{align*}
\beta_0^0(E_2^n ) (\gamma) &=
(\gamma-1)\cdot(E_2^n) \\
&= \gamma\cdot E_2^n-E_2^n=z_1(\gamma)E_1^n.
\end{align*}
By Lemma~\ref{lem:H^qE_1^n}
 $z_1\,E_1^n$ is a generator of $H^1\left(\Gamma;\left\langle E_1^n \right\rangle\right)$, and by Lemma~\ref{lem:H^qC(0)/<E_1^n>}
$\dim H^0\left(\Gamma;C(0)/\left\langle E_1^n \right\rangle\right)=1=\dim H^1\left(\Gamma;\left\langle E_1^n \right\rangle\right)$,
thus $\beta_0^0$ is an isomorphism. Consequently $H^0(\Gamma;C(0))=0$ as $H^0\left(\Gamma;\left\langle E_1^n \right\rangle\right)=0$ by
Lemma~\ref{lem:H^qE_1^n}.

Now by Lemma~\ref{lem:H^qC(0)/<E_1^n>}, the cochain $\bar v_1\co \Gamma\to C(0)$ given by
\[ 
\bar v_1 (\gamma) = h_1(\gamma) E_n^{n} + h_2(\gamma) E_{n-1}^{n}+\cdots+
h_{n-1}(\gamma) E_{2}^{n}
\]
represents a generator of $H^1(\Gamma;C(0)/\left\langle E_1^n \right\rangle)$ and by Lemma~\ref{lem:cobord}
\[\beta_0^1\left(h_1E_n^{n} + h_2 E_{n-1}^{n}+\cdots+
h_{n-1}E_{2}^{n}\right)=
\big(\sum_{i=1}^{n-1} z_{i}\smallcup h_{n-i}\big) E^{n}_1\,.\]
Moreover, by Proposition~\ref{prop:no-promotion} the cocycle $\big(\sum_{i=1}^{n-1} z_{i}\smallcup h_{n-i}\big) E^{n}_1$ represents a generator of $H^2(\Gamma;\left\langle E_1^n \right\rangle)$.
 Thus $\beta_0^1$ is an isomorphism and $H^q(\Gamma;C(0))=0$ for $q=1,2$.

Now suppose that $H^q(\Gamma; C(i_0))=0$ for $0\leq i_0\leq n-3$, $q=0,1,2$ and consider the following
short exact sequence of $\Gamma$-modules:
\begin{equation}\label{seq:C(i_0)}
0\to C(i_0)\rightarrowtail C(i_0+1)\twoheadrightarrow C(i_0+1)/C(i_0)\to 0\,.
\end{equation}
This sequence induces a long exact sequence in cohomology 
\begin{multline*}
0\to H^0(\Gamma;C(i_0))\to H^0(\Gamma;C(i_0+1))\to 
H^0\left(\Gamma;C(i_0+1)/C(i_0)\right)\rightarrow \\
H^1(\Gamma;C(i_0))\to H^1(\Gamma;C(i_0+1))
\to H^1\left(\Gamma;C(i_0+1)/C(i_0)\right)\rightarrow\\
H^2(\Gamma;C(i_0))\to 
H^2(\Gamma;C(i_0+1))\to H^2\left(\Gamma;C(i_0+1)/C(i_0)\right)\to 0\,.
\end{multline*}
Using the hypothesis, we conclude that the  groups $H^q(\Gamma;C(i_0+1))$ and 
$H^q\left(\Gamma;C(i_0+1)/C(i_0)\right)$ are isomorphic for
$q=0,1,2$. By Lemma~\ref{lem:C(i+1)/C(i)}, we obtain
$H^q(\Gamma;C(i_0+1))\cong H^q(\Gamma;C(0))=0$ for $q=0,1,2$.
\end{proof}

\subsection{Cohomology with coefficients in $\gln$}
\label{sec:gln}
In this subsection we will prove Proposition~\ref{prop:dimensions}.

\begin{proof}[Proof of Proposition~\ref{prop:dimensions}]
In order to compute the dimensions of the cohomology groups $H^*(\Gamma;\gln)$, 
we consider the short exact sequence
\begin{equation}\label{seq:C(n-1)}
0\to C(n-2)\rightarrowtail C(n-1)=\gln\twoheadrightarrow\gln/C(n-2)\to 0\,.
\end{equation}
The sequence \eqref{seq:C(n-1)} gives rise to the following long exact cohomology sequence: 
\begin{multline*}
0\to H^0(\Gamma;\gln)\to H^0(\Gamma;\gln/C(n-2))\to 
H^1(\Gamma;C(n-2))\rightarrow \\
H^1(\Gamma;\gln)\to H^1(\Gamma;\gln/C(n-2))
\to H^2\left(\Gamma;C(n-2)\right)\rightarrow\\
H^2(\Gamma;\gln)\to 
H^2(\Gamma;\gln/C(n-2))\to 0\,.
\end{multline*}
As $H^q(\Gamma;C(n-2))=0$ we conclude that 
\[
H^q(\Gamma;\gln)\cong H^q(\Gamma;\gln/C(n-2))\,.\] 
It remains to understand the quotient $\gln/C(n-2)$.

Clearly the vectors $ E_n^1,\ldots, E_1^1$ represent  a basis of $\gln/C(n-2)$ and 
there exists a $\Gamma$-module $M$ such that
the following sequence
\begin{equation}\label{seq:last}
0\to \left\langle E_1^1+C(n-2)\right\rangle\rightarrowtail\gln/C(n-2)\twoheadrightarrow M\to 0
\end{equation}
is exact. Now the sequence \eqref{seq:last} induces the following  exact cohomology sequence:
\begin{multline}\label{seq:longlast}
0\to H^0(\Gamma;\left\langle E_1^1+C(n-2)\right\rangle)\to H^0(\Gamma;\gln/C(n-2))\to H^0(\Gamma;M)\to\\
H^1(\Gamma;\left\langle E_1^1+C(n-2)\right\rangle)\rightarrow 
H^1(\Gamma;\gln/C(n-2))\to H^1(\Gamma;M)\to \\
H^2\left(\Gamma;\left\langle E_1^1+C(n-2)\right\rangle\right)\rightarrow
H^2(\Gamma;\gln/C(n-2))\to 
H^2(\Gamma;M)\to 0\,.
\end{multline}
Observe that the action of $\Gamma$ on $\left\langle E_1^1+C(n-2)\right\rangle$ is trivial.
Therefore, $\left\langle E_1^1+C(n-2)\right\rangle$ and $\C$ are isomorphic  $\Gamma$-modules.
By 
Lemma~\ref{lem:H1GammaC} we obtain 
\[ 
\dim H^q(\Gamma;\left\langle E_1^1+C(n-2)\right\rangle)=1\quad \text{ for $q=0,1$}
\]
 and $H^2(\Gamma;\left\langle E_1^1+C(n-2)\right\rangle)=0$.

To complete the proof we will make use of Lemma~\ref{lem:last colum}, which states that
the $\Gamma$-module $M$ is isomorphic to $\C[t^{\pm 1}]/(t-\alpha^{-1})^{n-1}$.
Recall that Lemma~\ref{lem:H1GammaC} implies that $H^0(\Gamma;\C[t^{\pm 1}]/(t-\alpha^{-1})^{n-1})=0$ and
\[ 
\dim H^q(\Gamma;\C[t^{\pm 1}]/(t-\alpha^{-1})^{n-1})=n-1, 
\quad\text{ for $q=1,2$.}
\]

Therefore, sequence~\eqref{seq:longlast} gives:
\[
H^q(\Gamma;\gln)\cong H^q(\Gamma;\gln/C(n-2))\cong
\begin{cases}
H^0(\Gamma;\C) & \text{ for $q=0$};\\
H^2(\Gamma;M) & \text{ for $q=2$} 
\end{cases}
\]
and the short exact sequence:
\[
0\to H^1(\Gamma;\C)\rightarrowtail H^1(\Gamma;\gln/C(n-2))\cong H^1(\Gamma;\gln)
\twoheadrightarrow H^1(\Gamma;M)\to 0\,.
\]
\end{proof}

\begin{lemma}\label{lem:last colum} 
The $\Gamma$-module $M$ is isomorphic to $\C[t^{\pm 1}]/(t-\alpha^{-1})^{n-1}$.
Consequently
\[H^0(\Gamma;M)=0, \quad \dim H^q(\Gamma;M)=n-1,\ q=0,1.\]
\end{lemma}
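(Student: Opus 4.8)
The plan is to identify the $\Gamma$-module $M$ explicitly and then invoke Lemma~\ref{lem:H1GammaC}. Recall from the proof of Proposition~\ref{prop:dimensions} that $\gln/C(n-2)$ has the classes of $E_1^1,\dots,E_n^1$ as a $\C$-basis, that in the short exact sequence~\eqref{seq:last} the submodule $\langle E_1^1+C(n-2)\rangle$ carries the trivial action, and that $M$ is the quotient; hence $M$ is $(n-1)$-dimensional over $\C$, with basis the classes $\bar E_2^1,\dots,\bar E_n^1$ of $E_2^1,\dots,E_n^1$. The first step is to read the action off \eqref{eq:action3}: the row vector ${}^tE_1\,\metrep(\gamma^{-1})$ has $\alpha^{-h(\gamma)}$ in its first slot, while the column vector $\metrep(\gamma)E_k$ has $1$ in row $k$ and $h_{k-j}(\gamma)$ in row $j$ for $2\le j\le k-1$; projecting the product onto the first column (passing to $\gln/C(n-2)$) and then killing the $E_1^1$-component (passing to $M$) gives, for $2\le k\le n$,
\[
\gamma\cdot\bar E_k^1=\alpha^{-h(\gamma)}\Bigl(\bar E_k^1+h_1(\gamma)\,\bar E_{k-1}^1+\dots+h_{k-2}(\gamma)\,\bar E_2^1\Bigr).
\]
I want to stress that the scalar $\alpha^{-h(\gamma)}$, coming from the $(1,1)$-entry of $\metrep(\gamma^{-1})$, is exactly what distinguishes $M$ from the modules $C(i)$ treated in Section~\ref{sec:C(i)}, whose action factored through $h$.

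Next I would identify this module with $\C[t^{\pm1}]/(t-\alpha^{-1})^{n-1}$. Choosing $\gamma$ with $h(\gamma)=1$, so that $h_1(\gamma)=1$ and $h_i(\gamma)=0$ for $i\ge2$, the formula above becomes $t\cdot\bar E_k^1=\alpha^{-1}(\bar E_k^1+\bar E_{k-1}^1)$ for $3\le k\le n$ and $t\cdot\bar E_2^1=\alpha^{-1}\bar E_2^1$; thus $t$ acts on the ordered basis $(\bar E_2^1,\dots,\bar E_n^1)$ as $\alpha^{-1}$ times a single unipotent Jordan block. One may now either appeal to Lemma~\ref{lem:iso}, with $\alpha$ replaced by $\alpha^{-1}$ and $n$ by $n-1$, after transposing and reversing the order of the basis vectors and using that $\trans{J_{n-1}}$ is conjugate to $J_{n-1}$; or, more directly, check by induction that $(t-\alpha^{-1})^{j}\bar E_n^1=\alpha^{-j}\bar E_{n-j}^1$ for $0\le j\le n-2$ while $(t-\alpha^{-1})^{n-1}\bar E_n^1=0$, so that $\bar E_n^1$ is a cyclic vector whose annihilator is exactly $(t-\alpha^{-1})^{n-1}$. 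Since $\dim_\C M=n-1$, either argument yields $M\cong\C[t^{\pm1}]/(t-\alpha^{-1})^{n-1}$.

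Finally the cohomological statement follows from Lemma~\ref{lem:H1GammaC}. Because $\tau_\alpha=\C[t^{\pm1}]/(t-\alpha)^{n-1}$ is nontrivial ($n\ge2$) while the $(t-1)$-torsion of the Alexander module vanishes, we have $\alpha\neq1$, hence $\alpha^{-1}\neq1$; and by Remark~\ref{rem:blanchfield} the $(t-\alpha^{-1})$-torsion of the Alexander module is the cyclic module $\C[t^{\pm1}]/(t-\alpha^{-1})^{n-1}$. Applying Lemma~\ref{lem:H1GammaC} with $k=n-1$ then gives $H^0(\Gamma;M)=0$, $H^q(\Gamma;M)=0$ for $q\ge3$, and $\dim H^q(\Gamma;M)=n-1$ for $q=1,2$, as required in the proof of Proposition~\ref{prop:dimensions}. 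The only genuine work here is the bookkeeping in computing the action on $M$, and the one point to watch is keeping track of the $\alpha^{-h(\gamma)}$ twist; I do not anticipate any real obstacle.
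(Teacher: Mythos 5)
Your proposal is correct and follows essentially the same route as the paper: identify the basis $(\bar E_2^1,\dots,\bar E_n^1)$ of $M$, use \eqref{eq:action3} to compute $\gamma\cdot\bar E_k^1=\alpha^{-h(\gamma)}\bigl(\bar E_k^1+h_1(\gamma)\bar E_{k-1}^1+\cdots+h_{k-2}(\gamma)\bar E_2^1\bigr)$, recognize the $t$-action as $\alpha^{-1}$ times a single Jordan block, and conclude via Lemma~\ref{lem:H1GammaC} together with Remark~\ref{rem:blanchfield}. The paper closes by writing down the explicit bijection $e_l\mapsto\bar E_{n-l}^1$ with $e_l=(\alpha(t-\alpha^{-1}))^l$, whereas your cyclic-vector computation (or the appeal to Lemma~\ref{lem:iso}) is an equivalent way of reaching the same identification; you have also correctly noted, in effect, that the dimensions $n-1$ occur for $q=1,2$, not $q=0,1$ as stated (a typo in the lemma).
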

\begin{proof}[Proof of Lemma~\ref{lem:last colum}] The proof is similar to the proof of Lemma~\ref{lem:H^qC(0)/<E_1^n>}.
As a $\C$-vector space the dimension of  $M$ is $n-1$ and a basis is given by
$\left( \overline{E_{n}^1},\ldots, \overline{E_2^1}\right)$ where
$\overline{E}_i^1 = E_i^1 +C(n-2)\in M$ is the class  represented by  $E_i^1$, $2\leq i\leq n$.
In order to prove that $M$ is isomorphic to 
$\C[t^{\pm 1}]/(t-\alpha^{-1})^{n-1}$ observe that by \eqref{eq:action3}
\[ \gamma\cdot E^1_k = \alpha^{-h(\gamma)} 
\big( E^1_k + h_1(\gamma) E^1_{k-1} + \cdots + h_{k-2}(\gamma) E^1_{2}\big) +X_k\]
where $X_k\in  E_1^1 + C(n-2)$.
Therefore, the action of $\Gamma$ on $M$ factors through
$h\co\Gamma\to\Z$. More precisely, we have for all $\gamma\in\Gamma$ such that
$h(\gamma)=1$
\[ 
\gamma\cdot \overline{E}^{1}_{k} = \alpha^{-1} (\overline{E}^{1}_{k} + \overline{E}^{1}_{k-1})\,.
\]
 On the other hand $e_l=\big(\alpha(t-\alpha^{-1})\big)^l$, $0\leq l\leq n-2$,
represents a basis of $\C[t^{\pm 1}]/(t-\alpha^{-1})^{n-1}$ and we have for all $\gamma\in\Gamma$ such that
$h(\gamma)=1$: 
\[
\gamma\cdot e_l = \alpha^{-1} (e_l+e_{l+1}) + p \ \text{where $p\in (t-\alpha^{-1})^{n-1}\C[t^{\pm 1}] $. }
\]
Hence the bijection
$\psi\co \{ e_l \mid 0\leq l\leq n-2\}\to\{ \overline{E}^{1}_{k} \mid 2\leq k\leq n\}$
given by $\varphi \co e_l\mapsto \overline{E}^{1}_{n-l}$,
$0\leq l\leq n-2$, induces an isomorphism of $\Gamma$-modules
$\psi \co \C[t^{\pm 1}]/(t-\alpha^{-1})^{n-1}\xrightarrow{\cong} M$.

Finally, the dimension equations follow from Lemma~\ref{lem:H1GammaC} and 
Remark~\ref{rem:blanchfield}.
\end{proof}

We obtain immediately that under the hypotheses  of Proposition~\ref{prop:dimensions} the representation $\metrep_\lambda$ is a smooth
point of the representation variety $R_n(\Gamma)$. This proves the first part of Theorem~\ref{thm:mainthm}.
\begin{prop} \label{prob:smooth}
Let $K$ be a knot in the $3$-sphere $S^3$.
If the $(t-\alpha)$-torsion $\tau_\alpha$ of the Alexander module is cyclic of the form $\C[t,t^{-1}]\big/(t-\alpha)^{n-1}$, $n\geq 2$, 
then the representation $\metrep_\lambda$ is a smooth point of the representation variety $R_{n}(\Gamma)$; 
it is contained in a unique $(n^2+2n-2)$-dimensional component $R_{\metrep_\lambda}$ of $R_n(\Gamma)$.
\end{prop}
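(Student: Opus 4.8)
The plan is to deduce the statement directly from Proposition~\ref{prop:dimensions}, established in Section~\ref{sec:CohomologicalComputations}, together with Proposition~\ref{prop:smoothpoint}. No further computation is needed: all the cohomological work has already been carried out, so the present statement is essentially a formal corollary.

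First I would check that the knot exterior $X=\overline{S^3\setminus V(K)}$ satisfies the hypotheses of Proposition~\ref{prop:smoothpoint}. Since $\tau_\alpha=\C[t^{\pm1}]/(t-\alpha)^{n-1}$ with $n\geq2$ is a non-trivial torsion submodule of the Alexander module, the number $\alpha\in\C^*\setminus\{1\}$ is a root of the Alexander polynomial $\Delta_K(t)$; in particular $\Delta_K(t)$ is not a unit of $\C[t^{\pm1}]$, so $K$ is a non-trivial knot. Hence $X$ is a compact, orientable, irreducible $3$-manifold with incompressible torus boundary, and $\Gamma=\pi_1(X)$ is infinite, which is exactly the setting of Proposition~\ref{prop:smoothpoint}. (Recall also that $X$ is aspherical, so group cohomology and manifold cohomology agree.)

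Next I would invoke Proposition~\ref{prop:dimensions}, which gives $H^0(\Gamma;\sln_{\Ad\circ\metrep_\lambda})=0$ and $\dim H^1(\Gamma;\sln_{\Ad\circ\metrep_\lambda})=n-1$. Applying Proposition~\ref{prop:smoothpoint} with $M=X$ and $\rho=\metrep_\lambda$, the hypothesis $\dim H^1(\Gamma;\sln_{\Ad\circ\metrep_\lambda})=n-1$ holds, so $\metrep_\lambda$ is a smooth point of $R_n(\Gamma)$ and lies on a unique component, whose dimension is
\[
n^2+n-2-\dim H^0(\Gamma;\sln_{\Ad\circ\metrep_\lambda})=n^2+n-2\,.
\]

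I do not expect any genuine obstacle here: the only real input is Proposition~\ref{prop:dimensions}, and once that is available the argument is purely formal. The sole remaining point is the routine verification that $X$ meets the topological hypotheses of Proposition~\ref{prop:smoothpoint}, and this reduces, as indicated above, to the observation that $K$ is non-trivial because $\Delta_K(\alpha)=0$.
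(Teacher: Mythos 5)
Your proof is correct and takes essentially the same route as the paper: combine Proposition~\ref{prop:dimensions} ($H^0=0$, $\dim H^1=n-1$) with Proposition~\ref{prop:smoothpoint} to conclude smoothness and read off the component dimension $n^2+n-2-\dim H^0=n^2+n-2$. The paper's own proof also records the redundant check $\dim Z^1=\dim H^1+\dim B^1=(n-1)+(n^2-1)=n^2+n-2$, which you omit, but Proposition~\ref{prop:smoothpoint} already delivers smoothness, so nothing is missing. Your extra verification that the knot exterior actually satisfies the topological hypotheses of Proposition~\ref{prop:smoothpoint} (via $\Delta_K(\alpha)=0\Rightarrow K$ non-trivial) is a worthwhile addition not spelled out in the paper. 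One thing you should flag explicitly: your answer $n^2+n-2$ disagrees with the $(n^2+2n-2)$ appearing in the statement of Proposition~\ref{prob:smooth}; that figure is a typo in the paper, as both Theorem~\ref{thm:mainthm} and the paper's own proof of Proposition~\ref{prob:smooth} give $n^2+n-2$.
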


\begin{proof}
By Proposition~\ref{prop:smoothpoint} and Proposition~\ref{prop:dimensions}, the representation $\metrep_\lambda$ is contained in a unique
component $R_{\metrep_\lambda}$ of dimension $(n^2+n-2)$. Moreover,
\begin{align*}
\dim Z^1(\Gamma;\sln) & =\dim H^1(\Gamma;\sln)+\dim B^1(\Gamma;\sln)\\
	& = (n-1)+(n^2-1 )\\ & =n^2+n-2\,.
\end{align*} 
Hence the representation $\metrep_\lambda$ is 
a smooth point of $R_n(\Gamma)$ which is contained in an unique $(n^2+n-2)$-dimensional component $R_{\metrep_\lambda}$. 
\end{proof}

For a later use, we describe more precisely the derivations $v_k\co\Gamma\to\sln$, $1\leq k\leq n-1$, which represent a basis of $H^1(\Gamma;\sln)$.

\begin{cor}\label{cor:cocycles}
There exists cochains $z_1^-,\cdots,z_{n-1}^-\in C^1(\Gamma;\C_{\alpha^{-1}})$
such that $\delta z_k^- +\sum_{i=1}^{k-1}h_i\smallcup z_{k-i}^-=0$ for $k=1,\ldots,n-1$
and $z_1^-\co\Gamma\to\C_\alpha^{-1}$ is a non-principal derivation.

Moreover, there exist cochains $g_k\co\Gamma\to\C$ and $x_k\co\Gamma\to\C(n-2)$,
$1\leq k\leq n-1$, such that the cochains $v_k\co\Gamma\to\sln$ given by
\[
v_{k}=g_kE_1^1+ z_k^-E_2^1+\cdots+z_1^-E_{k+1}^1+x_k
\] 
are cocycles and represent a basis of $H^1(\Gamma;\sln)$.
\end{cor}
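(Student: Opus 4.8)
The plan is to unwind the proof of Proposition~\ref{prop:dimensions}. There one shows $H^1(\Gamma;\gln)\cong H^1(\Gamma;\gln/C(n-2))$ and that the right‑hand side sits in a short exact sequence $0\to H^1(\Gamma;\C)\to H^1(\Gamma;\gln/C(n-2))\to H^1(\Gamma;M)\to0$ with $M\cong\C[t^{\pm1}]/(t-\alpha^{-1})^{n-1}$. Because $\gln=\sln\oplus\C I_n$ and the central line $\C I_n$ becomes $\langle E_1^1+C(n-2)\rangle$ in $\gln/C(n-2)$ (as $I_n\equiv E_1^1\bmod C(n-2)$), the composite
\[
H^1(\Gamma;\sln)\hookrightarrow H^1(\Gamma;\gln)\xrightarrow{\ \cong\ }H^1(\Gamma;\gln/C(n-2))\twoheadrightarrow H^1(\Gamma;M)
\]
is injective, hence an isomorphism by Proposition~\ref{prop:dimensions}. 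So it suffices to produce explicit cocycles representing a basis of $H^1(\Gamma;M)$ and to lift them into $\sln$ while controlling their shape.

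First I would produce the $z_k^-$. By Remark~\ref{rem:blanchfield} the $(t-\alpha^{-1})$‑torsion of the Alexander module is cyclic of order $n-1$, so the construction of Section~\ref{sec:reducible} applies with $\alpha$ replaced by $\alpha^{-1}$: a reducible metabelian representation of type~\eqref{eq:rep-alt-gln} built from $\alpha^{-1}$ has a defining cocycle $(z_1^-,\dots,z_{n-1}^-)$ whose components satisfy $\delta z_k^-+\sum_{i=1}^{k-1}h_i\smallcup z_{k-i}^-=0$ by Lemma~\ref{lem:tildez}. Using Lemma~\ref{lem:iso} and the universal coefficient theorem, $H^1(\Gamma;\C[t^{\pm1}]/(t-\alpha^{-1})^{n-1})\cong\hom_{\C[t^{\pm1}]}(\C[t^{\pm1}]/(t-\alpha^{-1})^{n-1},\C[t^{\pm1}]/(t-\alpha^{-1})^{n-1})$ is generated by the identity, which maps to a generator of $H^1(\Gamma;\C_{\alpha^{-1}})$ under the equivariant projection onto the leading coordinate $\C^{n-1}\to\C_{\alpha^{-1}}$; choosing $(z_1^-,\dots,z_{n-1}^-)$ to represent this generator makes $z_1^-$ non‑principal.

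Next I would build a basis of $H^1(\Gamma;M)$. By Lemma~\ref{lem:last colum} the module $M$ has $\C$‑basis $\overline E_2^1,\dots,\overline E_n^1$ with $\gamma\cdot\overline E_j^1=\alpha^{-h(\gamma)}\big(\overline E_j^1+h_1(\gamma)\overline E_{j-1}^1+\cdots+h_{j-2}(\gamma)\overline E_2^1\big)$, so $M_k:=\langle\overline E_2^1,\dots,\overline E_{k+1}^1\rangle$ is a submodule, $M_k\cong\C[t^{\pm1}]/(t-\alpha^{-1})^k$ (Lemma~\ref{lem:iso}) and $M_k/M_{k-1}\cong\C_{\alpha^{-1}}$. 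I would set $w_k:=z_k^-\overline E_2^1+z_{k-1}^-\overline E_3^1+\cdots+z_1^-\overline E_{k+1}^1\in C^1(\Gamma;M_k)$ for $1\le k\le n-1$; under $M_k\cong\C^k$ this is the map $\gamma\mapsto(z_1^-(\gamma),\dots,z_k^-(\gamma))$, which is a cocycle by precisely the computation in the proof of Lemma~\ref{lem:tildez} applied to the recursion for the $z_i^-$. Its image in $M_k/M_{k-1}\cong\C_{\alpha^{-1}}$ is $z_1^-$, non‑principal; since $H^0(\Gamma;\C_{\alpha^{-1}})=0$ the arrow $H^1(\Gamma;M_{k-1})\to H^1(\Gamma;M_k)$ is injective, and by the dimension formula of Lemma~\ref{lem:H1GammaC} the arrow $H^1(\Gamma;M_k)\to H^1(\Gamma;\C_{\alpha^{-1}})$ is onto. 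Hence $[w_k]\notin H^1(\Gamma;M_{k-1})$ while $[w_1],\dots,[w_{k-1}]\in H^1(\Gamma;M_{k-1})$, so inductively $[w_1],\dots,[w_{n-1}]$ form a basis of $H^1(\Gamma;M)$.

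Finally I would lift. Set $P:=\langle E_1^1\rangle\oplus C(n-2)$; by~\eqref{eq:action1} it is a $\Gamma$‑submodule of $\gln$ with $\gln/P\cong M$, and from $0\to C(n-2)\to P\to\langle E_1^1+C(n-2)\rangle\to0$ together with $H^2(\Gamma;C(n-2))=0$ (Proposition~\ref{prop:H^qC(i)}) and $H^2(\Gamma;\C)=0$ (Lemma~\ref{lem:H1GammaC}) one gets $H^2(\Gamma;P)=0$. Thus the $\gln$‑valued cochain $\gamma\mapsto z_k^-(\gamma)E_2^1+\cdots+z_1^-(\gamma)E_{k+1}^1$, a lift of $w_k$, has $P$‑valued coboundary representing the zero class, so adding a suitable cochain $g_kE_1^1+x_k$ with $g_k\co\Gamma\to\C$, $x_k\co\Gamma\to C(n-2)$ turns it into a cocycle $v_k\in Z^1(\Gamma;\gln)$ of the stated form; subtracting $\tfrac1n(\tr v_k)I_n$ only modifies $g_k$ and $x_k$ (because $\sum_{j\ge2}E_j^j\in C(n-2)$), so we may take $v_k\in Z^1(\Gamma;\sln)$. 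Since $v_k$ maps to $w_k$ in $Z^1(\Gamma;M)$ and the composite $H^1(\Gamma;\sln)\to H^1(\Gamma;M)$ above is an isomorphism, $[v_1],\dots,[v_{n-1}]$ is a basis of $H^1(\Gamma;\sln)$. I expect the main obstacle to be the middle step — identifying the cocycles $w_k$ and running the filtration argument to see they form a basis of $H^1(\Gamma;M)$; the rest is bookkeeping with the exact sequences already assembled in Section~\ref{sec:CohomologicalComputations}.
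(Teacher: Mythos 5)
Your proof is correct and takes essentially the same route as the paper: use the isomorphism $H^1(\Gamma;\sln)\cong H^1(\Gamma;M)$ coming out of Proposition~\ref{prop:dimensions}, build cocycles in $M$ from the recursion $\delta z_k^-+\sum h_i\smallcup z_{k-i}^-=0$ (via Remark~\ref{rem:blanchfield} and the column-vector model of $M$), and observe they give a basis that lifts to $\sln$. The paper's proof is considerably terser — it simply asserts that the $\mathbf{z}_k^-$ represent a basis of $H^1(\Gamma;M)$ and that the projection restricts to an isomorphism — so your filtration argument for linear independence and your $H^2(\Gamma;P)=0$/Bockstein argument for the existence of the correction terms $g_kE_1^1+x_k$ are exactly the details needed to make the paper's sketch rigorous.
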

\begin{proof}
Recall that the vector space $M$ admits as a basis the family 
$\left( \overline{E}_{n}^1,\ldots, \overline{E}_2^1\right)$ and that
it is isomorphic to $\C[t^{\pm 1}]/(t-\alpha^{-1})^{n-1}$. Moreover it is easily seen that $M$ is isomorphic to the $\Gamma$-module of column vectors
$\C^{n-1}$ where the action is given by $t^ka=\alpha^{-k}J_{n-1}^ka$. Hence a cochain 
$\mathbf{z}^-\co\Gamma\to M$ with coordinates
$\mathbf{z}^-={}^t(z_{n-1}^-,\cdots,z_1^-)$ is a cocycle in $Z^1(\Gamma;M)$ if and only if
for all $\gamma_1,\gamma_2\in\Gamma$
\[\mathbf{z}^-(\gamma_1\gamma_2)=\mathbf{z}^-(\gamma_1)+
\alpha^{-h(\gamma_1)}J_{n-1}^{h(\gamma_1)}\mathbf{z}^-(\gamma_2).\]

It follows, as in the proof of Lemma~\ref{lem:tildez}, that this is equivalent to
\[z_k^-(\gamma_1\gamma_2)=z_k^-(\gamma_1)+\alpha^{-h(\gamma_1)}z_k^-(\gamma_2)+\sum_{i=1}^{k-1}h_i(\gamma_1)
\alpha^{-h(\gamma_1)}z_{k-i}^-(\gamma_2).\]
In other words, for $1\leq k\leq n-1$,
\[0=\delta z_k^-+\sum_{i=1}^{k-1}h_i\smile z_{k-i}^-.\]
By Remark~\ref{rem:blanchfield}, if $z_1^-\in Z^1(\Gamma;\C_{\alpha^{-1}})$ is a non-principal derivation, 
there exist cochains $z_k^-\co\Gamma\to\C_{\alpha^{-1}}$, $2\leq k\leq n-1$,  such that 
$$0=\delta z_k^-+\sum_{i=1}^{k-1}h_i\smile z_{k-i}^-\,.$$ 
Consequently, as $\dim H^1(\Gamma;M)=n-1$, the cochains
\[
\mathbf{z}^-_k= z_k^-\overline{E}_{2}^1+\cdots +z_1^-\overline{E}_{k+1}^1, \quad 1\leq k\leq n-1, 
\]
represent a basis of $H^1(\Gamma;M)$. The proof is completed by noticing that the projection
$H^1(\Gamma;\gln)\to H^1(\Gamma;M)$ restricts to an isomorphism between
$H^1(\Gamma;\sln)$ and $H^1(\Gamma;M)$.
\end{proof}

\section{Irreducible $\SLn$ representations}\label{sec:irred}
This section will be devoted to the proof of the last part of Theorem~\ref{thm:mainthm}.
At first, we proved that the representation $\metrep_\lambda$ is a smooth
point of $R_n(\Gamma)$ which is contained in a unique $(n^2+n-2)-$dimensional component $R_{\metrep_\lambda}$.
Then, to prove the existence
of irreducible representations in that component, we will make use of Corollary~\ref{cor:cocycles}
and Burnside's theorem on matrix algebras.

\begin{proof}[Proof of the last part of Theorem~\ref{thm:mainthm}]
To prove that the component $R_{\metrep_\lambda}$ contains irreducible 
non metabelian representations, we will generalize the argument given in \cite{BenAbdelghani-Heusener-Jebali2010} for $n=3$.

Let $\Gamma=\langle S_1,\ldots,S_n|\;W_1,\ldots,W_{n-1}\rangle$ be a Wirtinger presentation of the knot group. 
Modulo conjugation
of the representation $\metrep_\lambda$, we can assume that $z_1(S_1)=\ldots=z_{n-1}(S_1)=0$. This conjugation corresponds to 
adding a coboundary to the cochains $z_i$, $1\leq i\leq n-1$.
We will also assume that the second Wirtinger generator $S_2$ verifies $z_1(S_2)=b_1\not=0=z_1(S_1)$. This is always possible
since $z_1$ is not a coboundary. 
Hence
\[\metrep_\lambda(S_1)=\alpha^{-1/n}\left(\begin{array}{c|cc}
\alpha&0\\
\hline0&J_{n-1}
\end{array}\right)\quad\text{and}\quad\metrep_\lambda(S_2)=
\alpha^{-1/n}\left(\begin{array}{c|cc}
\alpha&b\\
\hline0&J_{n-1}
\end{array}\right)\]
where  $b=(b_1,\ldots,b_{n-1})$ with $b_1\in\C^*$ and $b_i=z_i(S_2)\in\C$ for $2\leq i\leq n-1$.

Let $v_{n-1}\in Z^1(\Gamma;\sln)$ be a cocycle such that:
\[v_{n-1}= g_{n-1} E^1_1 + z_1^-E_n^1+z_2^-E^1_{n-1}+\ldots+z_{n-1}^-E_2^1+x_{n-1}\]
given by Corollary~\ref{cor:cocycles}. Up to adding a coboundary to the cocycle $z_1^-$ we assume that
$z_1^-(S_1)=0$. Notice that, by Lemma 5.5 of \cite{BenAbdelghani-Heusener-Jebali2010}, $z_1^-(S_2)\not=0$.

Let $\rho_t$ be a deformation of $\metrep_\lambda$ with leading term $v_{n-1}$:
\[
\rho_t = \big( I_n + t\, v_{n-1} + o(t) \big) \metrep_\lambda\,, \text{ where } \lim_{t\to 0} \frac{o(t)}{t} =0\,.
\]
We may apply the following lemma (whose proof is completely analogous to that of Lemma 5.3 in~\cite{BenAbdelghani-Heusener-Jebali2010})
to this deformation for $A(t)=\rho_t(S_1)$.
\begin{lemma}\label{lem:eigen}

Let $\rho_t\co\Gamma\to \SLn$ be a curve in $R_n(\Gamma)$ with $\rho_0=\metrep_\lambda$. Then there exists a curve $C_t$ in $\SLn$ such that $C_0=I_n$ and 
\[\Ad_{C_t}\circ\rho_t(S_1)=\begin{pmatrix}
a_{11}(t)&0&\ldots&0\\
0&a_{22}(t)&\ldots&a_{2n}(t)\\
\vdots&\vdots&&\vdots\\
0&a_{n2}(t)&\ldots&a_{nn}(t)
\end{pmatrix}\,\]
for all sufficiently small $t$.
\end{lemma}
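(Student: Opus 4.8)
The plan is to mimic the argument of Lemma~5.3 in~\cite{BenAbdelghani-Heusener-Jebali2010}: the point is that $\metrep_\lambda(S_1)$ has a \emph{simple} eigenvalue, so the eigenvalue and its eigenline can be followed holomorphically along the curve, and then the whole matrix can be block-diagonalized by a holomorphic family of conjugations.

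First I would observe that, by the normalisation $z_1(S_1)=\cdots=z_{n-1}(S_1)=0$ (valid after conjugating $\metrep_\lambda$), we have
\[
\metrep_\lambda(S_1)=\alpha^{-1/n}\begin{pmatrix}\alpha & 0\\ 0 & J_{n-1}\end{pmatrix},
\]
whose eigenvalue $\alpha^{-1/n}\cdot\alpha=\alpha^{1-1/n}$ on the line $\C E_1$ is different from the eigenvalue $\alpha^{-1/n}$ of the Jordan block $J_{n-1}$, since $\alpha\neq 1$. Hence $\alpha^{1-1/n}$ is a simple root of the characteristic polynomial of $A(0)=\metrep_\lambda(S_1)$, and $E_1$ spans the corresponding eigenline, which is transverse to the invariant complement $\langle E_2,\ldots,E_n\rangle$.

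Next, let $A(t)=\rho_t(S_1)$, an analytic (or formal) curve in $\SLn$ with $A(0)=\metrep_\lambda(S_1)$. Because $\alpha^{1-1/n}$ is a simple eigenvalue of $A(0)$, the implicit function theorem applied to the characteristic polynomial produces an analytic function $t\mapsto a_{11}(t)$ with $a_{11}(0)=\alpha^{1-1/n}$ which is a simple eigenvalue of $A(t)$ for all small $t$; and the associated eigenline $\ell_t=\Ker\big(A(t)-a_{11}(t) I_n\big)$ depends analytically on $t$ with $\ell_0=\C E_1$ (one can, e.g., solve $A(t)w(t)=a_{11}(t)w(t)$, $w(0)=E_1$, normalising the first coordinate of $w(t)$ to be $1$). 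Likewise, the spectral projector onto the complementary $A(t)$-invariant subspace $H_t$ of complex dimension $n-1$ depends analytically on $t$, with $H_0=\langle E_2,\ldots,E_n\rangle$. Now choose an analytic frame $(w(t), e_2(t),\ldots,e_n(t))$ adapted to the splitting $\ell_t\oplus H_t$ with $(w(0),e_2(0),\ldots,e_n(0))=(E_1,\ldots,E_n)$, and let $\tilde C_t\in\GLn$ be the change-of-basis matrix sending the canonical basis to this frame; then $\tilde C_0=I_n$ and $\tilde C_t^{-1} A(t)\tilde C_t$ is block-diagonal of the desired shape $\mathrm{diag}(a_{11}(t), B(t))$ with $B(t)\in\GLn[n-1,\C]$. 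Finally, to land in $\SLn$ rather than $\GLn$, replace $\tilde C_t$ by $C_t:=(\det\tilde C_t)^{-1/n}\tilde C_t$, using that $\det\tilde C_t$ is analytic with $\det\tilde C_0=1$ so an analytic $n$-th root exists near $t=0$; scaling by a scalar does not change the conjugation action, so $\Ad_{C_t}\circ\rho_t(S_1)=C_t^{-1}A(t)C_t$ still has the stated block form, and $C_0=I_n$.

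The only subtlety — and the single place where one must be slightly careful — is the analyticity of the eigendata: this is exactly where simplicity of the eigenvalue $\alpha^{1-1/n}$ of $A(0)$ is used, and it is what forces us to have done the normalisation making $\metrep_\lambda(S_1)$ block-diagonal first, together with the hypothesis $\alpha\neq 1$ guaranteeing that the top-left entry is not an eigenvalue of $J_{n-1}$. Everything else is the standard perturbation theory of a simple eigenvalue; since the argument is verbatim the proof of Lemma~5.3 in~\cite{BenAbdelghani-Heusener-Jebali2010} with $n-1$ in place of $2$, we omit the routine details.
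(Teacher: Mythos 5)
Your proposal is correct and takes essentially the same route the paper intends: the paper only remarks that the proof is ``completely analogous to that of Lemma~5.3 in \cite{BenAbdelghani-Heusener-Jebali2010}'', and that argument is precisely analytic perturbation of the simple eigenvalue $\lambda^{n-1}=\alpha^{1-1/n}$ of $\metrep_\lambda(S_1)$, separated from $\lambda^{-1}$ because $\alpha\neq1$, together with the holomorphic family of spectral projectors yielding the invariant complement. You also correctly identify the two hypotheses that make this work — the normalisation $z_i(S_1)=0$, so that $\langle E_2,\ldots,E_n\rangle$ is $A(0)$-invariant and $C_0=I_n$, and $\alpha\neq1$, which ensures simplicity — and the $\SLn$ normalisation via an analytic $n$-th root of $\det\tilde C_t$ is the right finishing touch.
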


Therefore,  we may suppose that $a_{n1}(t)=0$, and since
\[ 
a_{n1}(t)=t\lambda^{n-1}\left(z_1^-(S_1)+\delta c(S_1)\right)+o(t)\,,\ \text{ for $c\in\C$,}
\]
 it follows that
\[a'_{n1}(0)=\lambda^{n-1}(z_1^-(S_1)+(\alpha^{-1}-1)c)=0\]
and hence $c=0$. For $B(t)=\rho_t(S_2)$, we obtain $b'_{n1}(0)=\lambda^{n-1}z_1^-(S_2)\neq 0$. Hence, we can apply 
the following technical lemma (whose  proof will be postponed to the end of this section).

\begin{lemma}\label{lem:irred}
Let $A(t)=(a_{ij}(t))_{1\leq i,j\leq n}$ and $B(t)=(b_{ij}(t))_{1\leq i,j\leq n}$ be matrices depending analytically on $t$ such that 
\[
A(t)=\left(\begin{array}{c|cc}
a_{11}(t)& 0\\
\hline 0& A_{11}(t)&
\end{array}\right),\quad
A(0) =\metrep_\lambda(S_1)=
\alpha^{-1/n}\left(\begin{array}{c|cc}
\alpha&0\\
\hline0&J_{n-1}
\end{array}\right)
\]
and
\[
B(0) =\metrep_\lambda(S_2)=
\alpha^{-1/n}\left(\begin{array}{c|cc}
\alpha&b\\
\hline0&J_{n-1}
\end{array}\right)\,.
\]
If the first derivative $b'_{n1}(0)\neq 0$ then for sufficiently small $t$, $t\neq 0$, the matrices $A(t)$ and $B(t)$ generate the full matrix algebra $M(n,\mathbf{C})$.
\end{lemma}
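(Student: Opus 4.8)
The plan is to show that, for all sufficiently small $t\neq0$, the matrices $A(t)$ and $B(t)$ admit no common proper non-zero invariant subspace of $\C^n$; Burnside's theorem on matrix algebras then yields the assertion. So suppose $W\subset\C^n$ is a non-zero subspace invariant under both $A(t)$ and $B(t)$. The starting observation is structural: $A(t)$ is block diagonal, hence preserves $L:=\langle E_1\rangle$ and $V:=\langle E_2,\dots,E_n\rangle$; and since $\alpha\neq1$ (because $\Delta_K(1)=\pm1$), for small $t$ the eigenvalue $a_{11}(t)$ of $A(t)|_L$ (close to $\alpha^{1-1/n}$) is distinct from every eigenvalue of $A(t)|_V=A_{11}(t)$ (all close to $\alpha^{-1/n}$). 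Applying the spectral projections of $A(t)$, which are polynomials in $A(t)$ and therefore preserve $W$, one gets $W=(W\cap L)\oplus(W\cap V)$. It remains to rule out the two possibilities $W\subseteq V$, and $W\cap L=L$ with $W\neq\C^n$.

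If $W\subseteq V$, pick $0\neq w\in W$. Then $B(t)A_{11}(t)^m w=B(t)A(t)^m w\in W\subseteq V$ for every $m\geq0$, so the $E_1$-component of $B(t)A_{11}(t)^m w$ vanishes. Since $A_{11}(0)=\alpha^{-1/n}J_{n-1}$ is a single Jordan block, the cyclic subspace it generates from any non-zero vector of $V$ contains $E_2$; by letting $t\to0$ along a sequence and extracting a convergent subsequence of unit vectors $w$ (or, equivalently, by lower semicontinuity of the rank of the analytic family $v\mapsto\bigl(PB(t)A_{11}(t)^m v\bigr)_{0\leq m\leq n-2}$, with $P$ the projection of $\C^n$ onto $L$) one would conclude that the $E_1$-component of $B(0)E_2$ is zero. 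But $B(0)=\metrep_\lambda(S_2)$ gives $B(0)E_2=\alpha^{-1/n}(b_1E_1+E_2)$ with $b_1=z_1(S_2)\neq0$, a contradiction. Hence $E_1\in W$.

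Set $u(t):=B(t)E_1-b_{11}(t)E_1=\sum_{j=2}^{n}b_{j1}(t)E_j\in W\cap V$. The first column of $B(0)=\metrep_\lambda(S_2)$ vanishes below its top entry, so $b_{j1}(0)=0$ for $j\geq2$, whence $u(t)/t\to u':=\sum_{j=2}^{n}b'_{j1}(0)E_j$, whose $E_n$-coordinate $b'_{n1}(0)$ is non-zero. Thus $u'$ is a cyclic vector for $A_{11}(0)=\alpha^{-1/n}J_{n-1}$, and since being a cyclic vector is a Zariski-open (in particular open) condition on the pair (vector, matrix), $u(t)$ is cyclic for $A_{11}(t)$ for all small $t\neq0$. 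As $V$ is $A(t)$-invariant with $A(t)|_V=A_{11}(t)$ and $u(t)\in W\cap V$, the whole cyclic span $\langle A_{11}(t)^m u(t)\mid m\geq0\rangle=V$ lies in $W$; combined with $E_1\in W$ this forces $W=\C^n$, a contradiction. Therefore $A(t)$ and $B(t)$ generate $M(n,\C)$.

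The delicate part is exactly the passage to $t=0$: the invariant subspace $W=W(t)$ varies with $t$; the vector $u(t)$ degenerates to $0$, so one must work with its leading term $u(t)/t$ and exploit the hypothesis $b'_{n1}(0)\neq0$ rather than the vanishing $b_{n1}(0)=0$; and $A_{11}(t)$ need not be conjugate to a single Jordan block for $t\neq0$, so the transparent description of invariant subspaces is only available in the limit. These points are resolved by a Grassmannian compactness argument together with continuity of the eigenvalue clustering (or, as indicated above, by semicontinuity of rank), using that the loci of derogatory matrices and of non-cyclic (vector, matrix) pairs are proper Zariski-closed sets and hence meet an analytic arc through an exterior point only in a discrete set.
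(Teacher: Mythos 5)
Your argument is correct, and it reaches the conclusion by a route that is organized differently from the paper's even though the two key computations coincide. The paper argues \emph{directly} that the algebra $\mathcal A_t$ generated by $A(t),B(t)$ equals $M(n,\C)$: it first produces the rank-one matrix $E_1^1$ as a polynomial in $A(t)$ (exactly your spectral-projection observation, written as $P_{A_{11}(t)}(A(t))/P_{A_{11}(t)}(a_{11}(t))$), and then shows separately that the row orbit $(1,0,\dots,0)\mathcal A_t$ and the column orbit $\mathcal A_t E_1$ each span $\C^n$, the first by evaluating a determinant at $t=0$ (using $b_1\neq0$), the second by showing that the determinant $f(t)=\det(a_1(t),\dots,a_n(t))$ vanishes to exact order $n-1$ at $t=0$ with $f^{(n-1)}(0)\neq0$ (using $b'_{n1}(0)\neq0$). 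This last computation is literally your cyclicity statement for $\mathbf b'(0)=u'(0)$ under $A_{11}(0)$, phrased determinantally. You instead argue \emph{indirectly} via Burnside and invariant subspaces: your Case~2 ($L\subseteq W$) is a direct translation of the column-orbit step, while your Case~1 ($W\subseteq V$) replaces the paper's explicit row-orbit determinant, which is a clean nonvanishing-at-$t=0$ argument, by a Grassmannian-compactness extraction of a limiting unit vector $w$ plus the observation that $E_2$ lies in the $A_{11}(0)$-cyclic span of every nonzero $w\in V$. That works, but it is the one place where your route is genuinely more delicate than the paper's: you have to handle the fact that the hypothetical invariant subspace varies with $t$ and that its limit is a priori unknown, whereas the paper's row determinant is already nonzero at $t=0$ so no extraction is needed. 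One small wording issue: you invoke ``$u(t)$ is cyclic by Zariski-openness'' even though $(u(t),A_{11}(t))\to(0,A_{11}(0))$ lands on the non-cyclic locus; as you yourself note in the closing paragraph, the correct argument applies openness to the rescaled pair $(u(t)/t,A_{11}(t))$, whose limit $(u'(0),A_{11}(0))$ is cyclic because $b'_{n1}(0)\neq0$ — that wording should be made explicit where you first assert cyclicity rather than deferred to the commentary.
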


Hence for sufficiently small $t\neq0$ we obtain that $A(t)=\rho_t(S_1)$ and 
$B(t)=\rho_t(S_2)$ generate $M(n,\mathbf{C})$.
By Burnside's matrix theorem, such a representation $\rho_t$ is irreducible

To conclude the proof of Theorem~\ref{thm:mainthm}, we will prove that all irreducible representations sufficiently close to $\metrep_\lambda$ are non-metabelian.
In order to do so, we will make use of the following result of H.~Boden and S.~Friedel 
\cite[Theorem 1.2]{Boden-Friedl2008}: for 
every irreducible metabelian representation $\rho\co\Gamma\to \SLn$ we have $\tr\rho(S_1)=0$. 
Now, we have $\tr\metrep_\lambda(S_1)=\lambda^{-1}(\lambda^n+n-1)$ and we claim that
$\lambda^n+n-1\neq 0$. Notice that $\alpha=\lambda^n$ is a root of the Alexander polynomial
$\Delta_K(t)$ and $\lambda^n+n-1=0$ would imply that $1-n$ is a root of $\Delta_K(t)$.
This would imply that $t+n-1$ divides $\Delta_K(t)$ and hence $n$ divides $\Delta_K(1)=\pm1$ which is impossible since $n\geq 2$. Therefore, $\tr(\rho(S_1))\neq0$ for all irreducible representations sufficiently close to
$\metrep_\lambda$. This proves Theorem~\ref{thm:mainthm}.  
\end{proof}

\begin{remark}
Let $\rho_\lambda\co\Gamma\to \SLn$ be the diagonal representation given by 
$\rho_\lambda(\mu) =  \mathrm{diag}(\lambda^{n-1},\lambda^{-1} I_{n-1})$ where $\mu$ is a meridian of $K$. 
The orbit $\mathcal O (\rho_\lambda)$ of
$\rho_\lambda$ under the action of conjugation of $\SLn$ is contained in the closure 
$\overline{\mathcal O (\metrep_\lambda)}$. Hence $\metrep_\lambda$ and $\rho_\lambda$ project to the same point $\chi_\lambda$ of the variety of characters $X_n(\Gamma)= R_n(\Gamma)\sslash \SLn$.

It would be natural to study the local picture of the variety of characters
$X_n(\Gamma)= R_n(\Gamma)\sslash \SLn$ at $\chi_\lambda$ as done in \cite[\S\ 8]{Heusener-Porti2005}. Unfortunately, there are much more technical difficulties since in this case the quadratic cone $Q(\rho_\lambda)$ coincides with the Zariski tangent space 
$Z^1(\Gamma; \sln_{\rho_\lambda})$. Therefore the third obstruction has to be considered. 
\end{remark}

\begin{proof}[Proof of lemma~\ref{lem:irred}] 
The proof follows exactly the proof of Proposition 5.4  in \cite{BenAbdelghani-Heusener-Jebali2010}. We denote by $\mathcal{A}_t\subset \gln$ the algebra generated by $A(t)$ and $B(t)$. 
For any matrix $A$ we let $P_A(X)$ denote its characteristic polynomial.
We have $P_{A_{11}(0)} = (\lambda^{-1} - X)^{n-1}$ and $a_{11}(0)=\lambda^{n-1}$. 
Since $\alpha=\lambda^n\neq1$ we obtain $P_{A_{11}(0)}(a_{11}(0))\neq0$.
It follows that $P_{A_{11}(t)}(a_{11}(t))\neq 0$ for small $t$ and hence 
\[\displaystyle\frac{1}{P_{A_{11}(t)}(a_{11}(t))}P_{A_{11}(t)}(A(t))=\left(\begin{array}{c|cc}
1& 0\\
\hline 0& 0
\end{array}\right)=\begin{pmatrix}
1\\
0\\
\vdots\\
0\end{pmatrix}\otimes (1,0,\ldots,0)\in\C[A(t)]\subset \mathcal{A}_t\,.\]

In the next step we will prove that 
\[\mathcal{A}_t\begin{pmatrix}
1\\
0\\
\vdots\\
0\end{pmatrix}=\C^n\ \text{and}\ (1,0,\ldots,0)\mathcal{A}_t=\C^n\,,\ \text{for small\ }t\in\C^n\,.\]

It follows from this that $\mathcal{A}_t$ contains all rank one matrices since a rank one matrix can be written as $v\otimes w$ where $v$ is a column vector and $w$ is a row vector. Note also that $A(v\otimes w)=(Av)\otimes w$ and $(v\otimes w)A=v\otimes(wA)$. Since each matrix is the sum of rank one matrices the proposition follows.

Now consider the vectors 
\[(1,0,\ldots,0)A(0),\, (1,0,\ldots,0)B(0),\ldots,(1,0,\ldots,0)B(0)^{n-1}.\]
Then for $1\leq k\leq n-1:$
\[(1,0,\ldots,0)B(0)^k=\lambda^{-k}(\alpha^k,b\sum_{j=0}^{k-1}\alpha^{k-1-j}J^j)\] 
and the dimension $D$ of the vector space 
$$\langle(1,0,\ldots,0)A(0),(1,0,\ldots,0)B(0),\ldots(1,0,\ldots,0)B(0)^{n-1}\rangle$$ 
is equal to
\begin{align*}
D&=\dim\langle(\alpha,0),(\alpha,b),(\alpha^2,\alpha b+bJ),
\ldots,(\alpha^{n-1}, b\sum_{j=0}^{k-1}\alpha^{k-1-j}J^j)\rangle\\
&=\dim\langle (\alpha,0),(0,b),(0,bJ),\cdots(0,bJ^{n-2})\rangle.
\end{align*}
Here, $J=J_{n-1}=I_{n-1} +N_{n-1}$ where $N_{n-1}\in GL(n-1,\C)$ is the upper triangular Jordan normal form of a nilpotent matrix of degree $n-1$. Then a
direct calculation gives that 
\[
\dim\langle b,bJ,\ldots,bJ^{n-2}\rangle=
\dim\langle b,bN,\ldots,bN^{n-2} \rangle=
n-1\,,\ \text{as $b_1\not=0$.}
\]
Thus $\dim\langle(1,0,\ldots,0)A(0),(1,0,\ldots,0)B(0),\ldots(1,0,\ldots,0)B(0)^{n-1}\rangle=n$ and 
 the vectors 
\[(1,0,\ldots,0)A(0),\, (1,0,\ldots,0)B(0),\ldots,(1,0,\ldots,0)B(0)^{n-1}\] 
form a basis of the space of row vectors. This proves that $(1,0,\ldots,0)\mathcal{A}_t$ is the 
space of row vectors for sufficiently small $t$.

In the final step consider the $n$ column vectors 
\[a_1(t)=A(t)\begin{pmatrix}
1\\
0\\
\vdots\\
0
\end{pmatrix},\ a_i(t)=A^i(t)B(t)\begin{pmatrix}
1\\
0\\
\vdots\\
0
\end{pmatrix},\, 0\leq i\leq n-2
\]
and write $B(t)\begin{pmatrix}
1\\
0\\
\vdots\\
0
\end{pmatrix}=\left(\begin{array}{c}
b_{11}(t)\\
 \mathbf b(t)\end{array}\right)$ for the first column of $B(t)$; then
\[
a_1(t)=\left(\begin{array}{c}
a_{11}(t)\\
 \mathbf 0\end{array}\right),\,
a_{i+2}(t)=A^i(t)\left(\begin{array}{c}
b_{11}(t)\\
 \mathbf b(t)\end{array}\right),\,  0\leq i\leq n-2.
\]
Define the function $f(t):=\det(a_1(t),\ldots,a_n(t))$ and $g(t)$ by:
\[
f(t)=a_{11}(t) g(t), \text{ where $g(t) = \det\left( \mathbf b(t),A_{11}(t) \mathbf b(t),\ldots,
A^{n-2}_{11}(t) \mathbf b(t)\right).$}
\] 
Now, for $k\geq0$ the $k$-th derivative $g^{(k)}(t)$ of $g(t)$ is given by:
\[
\sum_{s_1,\ldots,s_{n-1}} c_{s_1,\ldots,s_{n-1}}
\det\left( \mathbf b^{(s_1)}(t),
(A_{11}(t) \mathbf b(t))^{(s_2)},\ldots,(A^{n-2}_{11}(t) \mathbf b(t))^{(s_{n-1})}\right)
\]
where 
\[ 
c_{s_1,\ldots,s_{n-1}} = 
\begin{cases}
{k\choose s_1,\ldots,s_{n-1}}=\frac{k!}{s_1!\ldots s_{n-1}!} &\text{ if $s_1+\cdots+s_{n-1}=k$;}\\
 0 &\text{ othewise.}
 \end{cases}
 \]
As $ \mathbf b(0)=0$ one have, for $0\leq k\leq n-2$,
$g^{(k)}(0)=0$ and consequently $f^{(k)}(0)=0$ for all
 $0\leq k\leq n-2$.\\
Now, for $k=n-1$, we have 
\begin{align*}
\frac{g^{(n-1)}(0)}{(n-1)!}&=\det\left( \mathbf b^\prime(0),
(A_{11}(t) \mathbf b(t))^\prime(0),\ldots,(A^{n-2}_{11}(t) \mathbf b(t))^\prime(0)\right)\\
&=\det\left( \mathbf b^\prime(0),
A_{11}(0) \mathbf b^\prime(0),\ldots,A^{n-2}_{11}(0) \mathbf b^\prime(0)\right)\\
&=\det\left( \mathbf b^\prime(0),
(\lambda^{-1}J) \mathbf b^\prime(0),\ldots,(\lambda^{-1}J)^{n-2} \mathbf b^\prime(0)\right)\\
&=\det\left( \mathbf b^\prime(0),
\lambda^{-1}N \mathbf b^\prime(0),\ldots,\lambda^{-(n-2)}N^{n-2} \mathbf b^\prime(0)\right)\\
&\not=0 \ \text{ since $b^\prime_{n1}\not=0$.}
\end{align*}
Thus, $f^{(n-1)}(0)=a_{11}(0)g^{(n-1)}(0)\neq 0$ and $f(t)\neq0$ for sufficiently small $t$, $t\neq 0$.
\end{proof}

\bibliographystyle{plain}
\bibliography{SLn}
\end{document}